\documentclass[a4paper, 11pt]{article}
\usepackage[a4paper,margin=2cm]{geometry}

\usepackage{amsmath, amsthm, amssymb, amsfonts}
\usepackage{enumerate}
\usepackage[hidelinks]{hyperref}
\usepackage{mathrsfs}
\usepackage{float}
\usepackage{caption}
\usepackage{subcaption}
\usepackage{authblk}

\usepackage[utf8]{inputenc}
\usepackage[T1]{fontenc}
\usepackage[french, english]{babel}

\numberwithin{equation}{section}

\usepackage{tikz}
\usetikzlibrary{arrows.meta,  decorations.markings,  calc,  positioning,  trees}
\usepackage{tikz-cd}

\newtheorem{theorem}{Theorem}[section]
\newtheorem{lemma}[theorem]{Lemma}
\newtheorem{proposition}[theorem]{Proposition}
\newtheorem{corollary}[theorem]{Corollary}

\theoremstyle{definition}
\newtheorem{definition}[theorem]{Definition}

\theoremstyle{remark}
\newtheorem{remark}[theorem]{Remark}
\newtheorem{example}[theorem]{Example}

\title{The Isoperimetric Problem in Regular Trees}  
\author{ Marc Troyanov\footnote{\texttt{marc.troyanov@epfl.ch},  \ {\'E}cole Polytechnique F\'ederale de Lausanne, Switzerland.}}
\date{April 20, 2026}

\begin{document}

\maketitle

\begin{abstract}
We investigate the inner vertex-isoperimetric problem on the $d$-regular tree $T_d$.
For each integer $k\ge 1$, we determine the exact value of the inner vertex-isoperimetric profile
\[
   I_d(k) = \min\{ |\partial D| \mid D\subset T_d \text{ finite and connected},\ |D|=k \}.
\]
We introduce a boundary invariant, called the \emph{boundary branching excess} $\tau(D)$, and show that it provides a simple criterion for optimality. A  domain $D\subset T_d$ is isoperimetrically optimal if and only if $\tau(D)\le d-2$. Finally, we show that every domain in $T_d$ admits a canonical decomposition as an iterated gluing of \emph{full domains}, namely domains whose entire boundary consists of leaves. This yields a complete description of all inner vertex-isoperimetric minimizers in  $T_d$.

\medskip

\noindent\textit{Keywords:} regular tree, isoperimetric profile, discrete isoperimetric inequalities, optimal domains, vertex boundary, Cheeger constant.

\smallskip

\noindent\textit{MSC(2020):} 05C05, 05C35, 05C10.
\end{abstract}

{
\setcounter{tocdepth}{2}
\small
\tableofcontents
}

\section{Introduction}
The present paper addresses the inner vertex-isoperimetric problem on the regular tree $T_d$ and provides an exact solution together with a complete structural description of optimal domains.
This investigation grew out of material developed during the PhD thesis of Bruno Luiz Santos Correia at EPFL \cite{CorreiaThesis2025}. The thesis contains initial results and observations on the inner isoperimetric problem on the regular tree, including a computation of the isoperimetric profile. In the present paper, we develop these ideas into a complete and systematically organized treatment.

\subsection{Statement of the problem}

The \emph{isoperimetric problem} in a locally finite graph $X$ addresses the following general question: among all domains $D \subset X$ (that is, finite
connected sets of vertices) of fixed cardinality, which ones have the smallest possible boundary? This problem naturally splits into two parts:
\begin{enumerate}[(i)]
  \item What is the smallest possible size of the boundary $\partial D$ of a
        domain $D \subset X$ of given cardinality?
  \item Describe all domains that are \emph{optimal}, that is, those which attain this minimum.
\end{enumerate}

Question~(i) corresponds to determining the best possible isoperimetric inequality in $X$, encoded by the \emph{isoperimetric profile}
$$
   I_X(k) = \min \left\{\, |\partial D| \mid D \subset X \text{ finite and connected},\ |D| = k \,\right\}.
$$
By definition, the isoperimetric inequality
$
    |\partial D| \geq I_X(|D|)
$
holds for all domains $D \subset X$, and the  isoperimetric profile is the largest function for which this inequality holds for all domains.

\smallskip 

Question~(ii) then consists in describing the structure of all domains $D$
satisfying
\[
   |\partial D| = I_X(|D|).
\]
In graph theory, one may consider the outer vertex boundary, the inner vertex boundary, or the edge boundary. In this article we focus on the inner vertex
boundary, for which the isoperimetric problem on the regular tree $T_d$ is non-trivial; by contrast, the outer and edge boundaries lead to essentially
trivial profiles (see Proposition~\ref{prop.outerboundary}).
Despite the extensive literature on isoperimetric inequalities on graphs, we are not aware of any previous work addressing the specific  inner vertex-isoperimetric problem on the infinite regular tree considered here.

\medskip

\textit{Remark.}  While edge-isoperimetric problems quantify the total amount of interaction
between a domain and its complement, the inner vertex boundary measures the
number of vertices within  the domain that are directly exposed to the exterior.
It is  the natural notion of boundary in vertex-based models, such as
epidemic spreading, information flow, or failure propagation in networked
systems.

\subsection{Outline of the paper}

In Section~\ref{sec.prelim} we introduce the basic graph-theoretic notions and notation used throughout the paper, and recall the relevant boundary notions for domains in graphs. 

\smallskip 

Specializing to the $d$-regular tree $T_d$, we introduce a natural decomposition
of the boundary of a domain into leaves and residual boundary vertices.
This leads to the notion of a \emph{full domain}, namely a domain whose entire
boundary consists of leaves.
The section concludes with a short proof of the classical identity relating the
size of a domain in $T_d$ to the size of its outer boundary:
\[
   |\partial' D| = (d-2)\,|D| + 2 .
\]

\smallskip 

The exact inner isoperimetric profile of the $d$-regular tree is established in Section~\ref{sec.mainthm}. It is given by
$$
   I_d(k) = \left\lceil \frac{(d-2)k + 2}{d-1} \right\rceil.
$$

\smallskip 

Section~\ref{sec.optimaldomains} investigates isoperimetrically optimal domains in $T_d$. Although one might initially expect metric balls to be the only optimal domains, B. L. Santos Correia observed in his thesis that the class of optimal domains is in fact considerably richer. In this section, we introduce a fundamental invariant, the \textit{boundary branching excess} $\tau(D)$, and show that optimality is characterized by the condition $\tau(D)\le d-2$. In particular, full domains satisfy $\tau(D)=0$ and are therefore optimal.

\smallskip 

Section~\ref{sec.fulldomains} is devoted to several equivalent characterizations
of full domains, and in Section~\ref{sec.gluing} we study a natural gluing
operation along boundary vertices for general domains and analyze the behavior
of the invariant $\tau$ under this operation.

\smallskip 

Section~\ref{sec.combstructure}, which forms one of the central parts of the paper, develops a detailed combinatorial description of arbitrary domains in
$T_d$.
We introduce the \emph{stem} of a domain, a finite tree that encodes how full components are attached along the residual boundary, establish its basic structural properties, and show how the invariant $\tau(D)$ can be read directly from its combinatorics. See in particular \S\ref{sec.stemdiagrams}
 
\smallskip

We further prove that any domain in $T_d$ can be reconstructed, up to isomorphism, from its stem together with a collection of full domains and suitable gluing data. This leads to a classification of all domains in the regular tree, and in particular of all isoperimetrically optimal domains, summarized in Theorem~\ref{th.reconstruction}.

\subsection{Related work}

Isoperimetric inequalities, and more generally geometric inequalities, have a long history dating back to the nineteenth century, and there is a vast literature which we do not attempt to survey here. A natural point of entry is the work of C.~Pittet~\cite{Pittet00}, which introduces the notion of isoperimetric profile from the perspective of Riemannian geometry and highlights the connections between continuous and discrete methods.

\smallskip 

Isoperimetric inequalities have also been extensively studied in geometric group theory. 
A fundamental result in this direction is that of Coulhon and Saloff-Coste~\cite{CoulhonSaloffCoste93}, who estimate the isoperimetric profile of a finitely generated group in terms of its growth function. 
See also~\cite{CT} for some recent refined estimates. 

Discrete isoperimetric problems also play a central role in combinatorics, expansion theory, percolation, random walk, and other probabilistic aspects of graph theory. General background can be found in~\cite{LyonsPeres2016, Woess2000}.

\smallskip 

The exact solution to the isoperimetric problem, including the exact isoperimetric profile and an explicit description of some (or all) minimal sets, is only known in a few cases. A classical example is that of the Hamming cube, whose vertex isoperimetric problem was solved by Harper~\cite{Harper64,Harper66}; see also Leader~\cite{Leader1991} for a pedagogical presentation. We further refer to Harper’s monograph~\cite{Harper2004} for a broad overview of combinatorial isoperimetric problems and the compression methods often used to study them.

\smallskip 

More closely related to the present work are recent studies of isoperimetric properties of finite trees, in particular the outer or edge isoperimetric profiles of complete $d$-ary trees, that is, balls of finite radius in the regular tree $T_d$. Relevant results and bounds can be found in \cite{BharadwajChandran2009, BonatoEtAl2024, Otachi, Vrto2010}.

\smallskip 

We also note that our notion of a \emph{full domain} is related to, but slightly different from, the notion of a \emph{full subtree} in the algorithms and data structures literature. In that setting, “full” typically refers to the number of children of each node in a rooted tree, whereas our notion is intrinsic to the underlying non-rooted tree and does not depend on a choice of root. On the other hand, our notion coincides with what is called a \emph{complete subtree} in the representation theory of groups acting on trees (see, e.g., \cite{Olshanskii1977}).

\smallskip

In this work, our focus is on the inner vertex boundary of domains in the infinite $d$-regular tree $T_d$. We obtain an exact formula for the corresponding isoperimetric profile together with a complete description of all optimal domains as assemblies of full components. To the best of our knowledge, this yields the first explicit and constructive classification of all inner vertex-isoperimetric minimizers in the regular tree $T_d$.

\section{Preliminaries}  \label{sec.prelim}

\subsection{Basic notions}

Throughout this paper we work with simple undirected graphs. We use standard graph-theoretic terminology and notation, as in
\cite{Bollobas,Diestel}. Let $X$ be a graph with vertex set $V(X)$ and edge set $E(X)$.
For a vertex $x\in V(X)$, we denote by $N_X(x)$ its set of neighbors, and by
\[
  \deg_X(x)=|N_X(x)|
\]
its degree in $X$. 
A nonempty subset $D\subset V(X)$ will always be identified with the induced subgraph on $D$; that is, we do not distinguish notationally between $D$ as a set of vertices and the corresponding induced subgraph. In particular, we write $|D|$ for the number of vertices of $D$.

\smallskip 

A \emph{domain} in $X$ is a finite connected subset $D\subset V(X)$.
If $|D|=2$, we call $D$ a \emph{two-vertex domain}.
If $D$ is a domain in $X$ and $x\in D$, the \emph{degree of $x$ in $D$} is defined by
\[
   \deg_D(x) = |N_X(x)\cap D|.
\]
A subset $S\subset V(X)$ is \emph{independent} if no edge of $X$ connects two vertices of $S$.

\smallskip

The \emph{Euler characteristic} of a finite graph $X$ is defined by
\[
   \chi(X)=|V(X)|-|E(X)|.
\]
For a connected finite graph, $\chi(X)=1$ if and only if $X$ is a finite tree, and for a forest $\chi(X)$ equals the number of connected components.

\smallskip

The \emph{inner vertex boundary} of a domain $D\subset X$ is defined by
\[
\partial D = \bigl\{\, x\in D \mid \exists\, y\notin D \text{ such that } \{x,y\}\in E(X) \,\bigr\}
= \bigl\{\, x\in D \mid \deg_D(x)<\deg_X(x) \,\bigr\}.
\]
Similarly, the \emph{outer vertex boundary} of $D$ is defined as
\[
\partial' D = \bigl\{\, y\notin D \mid \exists\, x\in D \text{ such that } \{x,y\}\in E(X) \,\bigr\},
\]
and the \emph{edge boundary} of $D$ is
\[
\partial_{\mathrm{edge}} D = \bigl\{\, \{u,v\}\in E(X) \mid  u\in D,\ v\notin D \,\bigr\}.
\]

We record two simple but useful facts on domains in a tree.

\begin{lemma}\label{lem:outer-boundary-tree}
Let $D$ be a domain in a tree $T$. Then every vertex $x\in \partial' D$ has exactly one neighbor in $D$.
Moreover, the outer boundary $\partial' D$ is an independent set.
\end{lemma}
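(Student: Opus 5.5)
Both assertions follow from one fact: a tree has no cycles, so any configuration that closes a cycle through the connected set $D$ is impossible. The plan is to turn each claim into the existence of such a cycle.

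For the first assertion, take $x\in\partial' D$. By definition of the outer boundary, $x$ has at least one neighbor in $D$. Suppose it had two distinct neighbors $u,v\in D$. Since $D$ is a domain, the induced subgraph on $D$ is connected, so there is a path $P$ from $u$ to $v$ using only vertices of $D$. Since $x\notin D$, the vertex $x$ does not lie on $P$. Then the path $P$, followed by the edges $\{v,x\}$ and $\{x,u\}$, is a closed walk. Its vertices are those of $P$ (pairwise distinct) together with the extra vertex $x$, so it is a cycle of length at least $3$. This contradicts the fact that $T$ is a tree. Hence $x$ has exactly one neighbor in $D$.

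For independence, suppose $x,y\in\partial' D$ are adjacent. By the first part, each has a unique neighbor in $D$; call them $u$ and $v$. Choose a path $P$ in $D$ from $u$ to $v$, with $P$ reduced to the single vertex $u$ if $u=v$. The closed walk
\[
x \to u \xrightarrow{P} v \to y \to x
\]
has distinct vertices, because $x,y\notin D$ and $x\neq y$. Its length is at least $3$, since even when $u=v$ it is the triangle $x,u,y$. So it is a cycle in $T$, again a contradiction. Therefore no two vertices of $\partial' D$ are adjacent, and $\partial' D$ is independent.

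The only step requiring care is checking that each closed walk is a genuine cycle: its vertices must be distinct and its length at least $3$. In both cases this holds because $x$ (and $y$) lie outside $D$ while $P$ lies inside $D$, and in the degenerate case $u=v$ the walk is a triangle.
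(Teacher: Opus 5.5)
Your proof is correct and follows the paper's argument: in both parts, a path inside the connected set $D$ is closed into a cycle in $T$ through the vertex or vertices lying outside $D$. Your explicit treatment of the degenerate case $u=v$ in the independence part, where the cycle is the triangle $x,u,y$, is a detail the paper leaves implicit.
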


\begin{proof}
By definition,  a vertex $x\in \partial' D$ has at least one neighbor in $D$.
If it had two distinct neighbors $y,z$ in $D$, then the unique path in $D$
joining $y$ to $z$, together with the edges $\{x,y\}$ and $\{z,x\}$ would form a cycle in $T$, contradicting the fact that
$T$ is a tree. Hence $x$ has exactly one neighbor in $D$.

\smallskip

Now suppose that two vertices $x,y \in \partial' D$ are adjacent. Let $u,v \in D$ be their respective neighbors in $D$. 
Then the unique path in $T$ joining $u$ to $v$ lies in $D$, and together with the edges $\{x,u\}$, $\{x,y\}$ and $\{y,v\}$ forms a cycle in $T$, again a contradiction. Therefore $\partial' D$ is an independent set.
\end{proof}

\medskip

\begin{lemma}\label{lem:embedding}
Every finite tree $G$ of maximum degree at most $d$ embeds  as a domain in $T_d$.
\end{lemma}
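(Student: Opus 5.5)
We prove the statement by induction on the number of vertices $n=|V(G)|$, constructing an injective map $\varphi\colon V(G)\to V(T_d)$ that sends edges to edges. Such a map has image $\varphi(V(G))$ equal to a domain in $T_d$ whose induced subgraph is isomorphic to $G$.

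The base case $n=1$ is immediate: send the single vertex to any vertex of $T_d$. For the inductive step with $n\ge 2$:
\begin{enumerate}[(1)]
\item Choose a leaf $v$ of $G$ (finite trees with at least two vertices have leaves) and let $u$ be its unique neighbor. The tree $G-v$ is finite, has maximum degree at most $d$, and has $n-1$ vertices.
\item By induction, embed $G-v$ as a domain $D'=\varphi(V(G-v))$ in $T_d$, with induced subgraph isomorphic to $G-v$ via $\varphi$.
\item In $G-v$ the vertex $u$ has degree $\deg_G(u)-1\le d-1$. Since $\varphi$ is an isomorphism onto the induced subgraph on $D'$, we get $\deg_{D'}(\varphi(u))=\deg_{G-v}(u)\le d-1<d$. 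Hence $\varphi(u)$ has a neighbor $y\notin D'$, that is, $y\in\partial' D'$.
\item Set $\varphi(v)=y$ and $D=D'\cup\{y\}$. This set is finite and connected, since $y$ is adjacent to $\varphi(u)\in D'$, so $D$ is a domain.
\end{enumerate}

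The one point requiring care is that the induced subgraph on $D$ acquires only the single new edge $\{\varphi(u),y\}$. This follows from Lemma~\ref{lem:outer-boundary-tree}: every $y\in\partial' D'$ has exactly one neighbor in $D'$. Therefore the induced subgraph on $D$ is the induced subgraph on $D'$ plus the pendant edge $\{\varphi(u),y\}$, and $\varphi$ is an isomorphism from $G$ onto it.

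This is the step where the tree structure of $T_d$ is essential: without it, adjoining $y$ could create extra induced edges and $D$ would no longer be isomorphic to $G$. The rest of the induction is routine.
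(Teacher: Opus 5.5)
Your proof is correct and is essentially the paper's argument: both build the embedding greedily by attaching one new vertex at a time to a free $T_d$-neighbor of the image of its unique already-placed neighbor. You do this by leaf-deletion induction, the paper by breadth-first levels from a root, and your explicit use of Lemma~\ref{lem:outer-boundary-tree} to rule out extra induced edges makes precise what the paper's phrase ``at most one neighbor of its image has already been used'' leaves implicit.
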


We include a brief proof for completeness.
\begin{proof}
Choose a root $x_0\in G$ and consider the rooted tree structure given by the distance from $x_0$. We construct the embedding inductively. 
Map $x_0$ to an arbitrary vertex $u_0\in T_d$. Suppose inductively that all vertices at distance at most $k$ from $x_0$ have been embedded in $T_d$ and pick a vertex $x\in G$ at level $k$. Its children in $G$ are adjacent only to $x$ and lie at level $k+1$.
Since $\deg_G(x)\le d$ and at most one neighbor of its image in $T_d$ has already been used (towards its parent),
there remain enough free neighbors to embed all its children.  
Since $G$ is finite, the construction ends after finitely many steps.
\end{proof}

\medskip

\subsection{Domains in a regular tree: leaves and residual boundary}

From now on, we work in the $d$-regular (or homogeneous) tree $T_d$, where $d\ge 2$. 
Recall that $T_d$ is the connected, acyclic graph in which every vertex has degree $d$. 
It is infinite and uniquely determined up to graph isomorphism. 
For general background on trees, we refer to \cite[Section~1.5]{Diestel} or \cite{West}. 

\medskip

A domain in $T_d$ is simply a finite subtree of $T_d$. Let $D\subset T_d$ be a domain with at least two vertices. 
We define its \emph{leaf boundary} and \emph{residual boundary} by
\[
   L(D)=\{x\in \partial D \mid \deg_D(x)=1\}, \qquad
   R(D)=\{x\in \partial D \mid   2 \leq  \deg_D(x) \leq d-1\}.
\]
Vertices in $L(D)$ are called \emph{leaves}, and $R(D)$ forms the \emph{residual boundary}. These two sets form a partition of the boundary: 
\[
   \partial D = L(D)\,\cup\, R(D), \quad L(D)\,\cap\, R(D) = \emptyset.
\]
The following elementary lemma on finite trees is well known and we state it without proof.
\begin{lemma}\label{lem:leaf-connected-complement}
Let $D\subset T_d$ be a domain with $|D|\ge 2$. Then $D$ contains at least two leaves.
Moreover, a vertex $x\in D$ is a leaf of $D$ if and only if $D\setminus\{x\}$ is connected.
\end{lemma}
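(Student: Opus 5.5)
The plan is to treat $D$ as a finite tree (the induced subgraph on $D$ is a connected subgraph of the tree $T_d$, hence itself a finite tree with $|D|\ge 2$ vertices) and to prove the two assertions separately.

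\emph{Existence of two leaves.} We will use a degree count. Since $D$ is a finite tree on $n=|D|\ge 2$ vertices, it has $n-1$ edges, so
\[
\sum_{x\in D}\deg_D(x)=2(n-1).
\]
Connectivity and $n\ge 2$ give $\deg_D(x)\ge 1$ for every $x$. If at most one vertex had degree $1$, then the sum would be at least $1+2(n-1)>2(n-1)$, a contradiction. Hence at least two vertices satisfy $\deg_D(x)=1$. Alternatively, one can take the two endpoints of a longest path in $D$; either argument suffices.

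It remains to check that such a vertex really belongs to $\partial D$, so that it lies in $L(D)$ as defined. This follows because $\deg_D(x)=1<d=\deg_{T_d}(x)$ when $d\ge 2$.

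\emph{Characterization of leaves.} Suppose first that $x$ is a leaf, with unique neighbor $y$ in $D$. Take $u,v\in D\setminus\{x\}$. The unique path in $D$ joining $u$ to $v$ cannot pass through $x$ as an interior vertex, because an interior vertex of a path uses two distinct neighbors in $D$. Therefore this path lies in $D\setminus\{x\}$, which shows that $D\setminus\{x\}$ is connected.

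Conversely, suppose $\deg_D(x)\ge 2$, and let $y\neq z$ be two neighbors of $x$ in $D$. Any path from $y$ to $z$ avoiding $x$ would, together with the edges $\{x,y\}$ and $\{x,z\}$, form a cycle in $T_d$, which is impossible. So $D\setminus\{x\}$ is disconnected.

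The case $\deg_D(x)=0$ cannot occur because $D$ is connected with $|D|\ge 2$. The edge case $|D|=2$ is consistent with the statement: removing either vertex leaves a single vertex, which is connected.

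I do not expect a genuine obstacle here. The only subtle point is making sure the definition of a leaf used in the paper, namely a vertex of $\partial D$ with $\deg_D(x)=1$, matches the graph-theoretic notion. That is exactly what the boundary check in the first part establishes.
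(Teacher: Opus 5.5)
Your proof is correct and complete. The paper gives no proof to compare against: it states the lemma as well known and omits the argument.

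What you wrote is the standard proof, with each step sound:
\begin{itemize}
\item The handshake count $\sum_{x\in D}\deg_D(x)=2(|D|-1)$ gives at least two vertices of $D$-degree $1$.
\item The path argument shows that removing such a vertex leaves $D\setminus\{x\}$ connected.
\item The cycle argument, using two distinct neighbors $y\neq z$ of $x$, shows that removing a vertex of degree at least $2$ disconnects $D$.
\end{itemize}
You also correctly check that a vertex with $\deg_D(x)=1$ automatically lies in $\partial D$, since $1<d$. This shows that the paper's definition of $L(D)$, which requires membership in $\partial D$, coincides with the usual graph-theoretic notion of a leaf.
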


\medskip 

\begin{definition}
A domain $D\subset T_d$  with $|D|\ge 2$ is said to have a \emph{full leaf boundary}, or simply to be a \emph{full domain},
if every boundary vertex is a leaf, that is, $\partial D = L(D)$; equivalently $R(D)=\emptyset$.
\end{definition}
Full domains will play a central role in the description of optimal domains developed later.
We end this section by stating a basic property of the outer boundary of a domain in $T_d$.
\begin{proposition}\label{prop.outerboundary}
The boundary of an arbitrary domain $D\subset T_d $ satisfies
\begin{equation}\label{eq.outerboundary}
   |\partial' D|  = (d-2)\,|D| + 2  = \sum_{x\in\partial D} (d-\deg_D(x)).
\end{equation}
\end{proposition}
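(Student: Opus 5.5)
The plan is to count the edge boundary $\partial_{\mathrm{edge}} D$ in two different ways and to use the tree structure of $D$ through its Euler characteristic.

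\textbf{First count, via the outer boundary.} Every edge of $\partial_{\mathrm{edge}} D$ has the form $\{x,y\}$ with $x\in D$ and $y\in\partial' D$. By Lemma~\ref{lem:outer-boundary-tree}, each $y\in\partial' D$ has exactly one neighbor in $D$. Hence the map sending an edge $\{x,y\}$ to its outer endpoint $y$ is a bijection $\partial_{\mathrm{edge}} D\to\partial' D$, and
\[
|\partial_{\mathrm{edge}} D| = |\partial' D|.
\]

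\textbf{Second count, via the inner endpoints.} Grouping the boundary edges by their endpoint $x\in D$, each $x$ contributes exactly $d-\deg_D(x)$ edges. This quantity vanishes unless $x\in\partial D$, so
\[
|\partial_{\mathrm{edge}} D|=\sum_{x\in D}(d-\deg_D(x))=\sum_{x\in\partial D}(d-\deg_D(x)).
\]
This gives the second equality in \eqref{eq.outerboundary}.

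\textbf{Evaluating the sum.} Since $D$ is a finite subtree of $T_d$, we have $\chi(D)=1$, so $|E(D)|=|D|-1$. The handshake lemma then gives $\sum_{x\in D}\deg_D(x)=2|D|-2$. Therefore
\[
\sum_{x\in D}(d-\deg_D(x)) = d|D|-2|D|+2=(d-2)|D|+2,
\]
which is the first equality.

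\textbf{Degenerate case.} The case $|D|=1$ needs only a quick check. There $\deg_D(x)=0$, so $\partial D=D$ and $|\partial' D|=d=(d-2)+2$, and the same computation applies unchanged.

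There is no real obstacle here. The only step needing any care is the bijection in the first count, and it is exactly the content of Lemma~\ref{lem:outer-boundary-tree}.
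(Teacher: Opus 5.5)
Your proof is correct, but you reach the first equality by a different route from the paper. The paper proves $|\partial' D|=(d-2)|D|+2$ by induction on $|D|$. It removes a leaf $x$, which keeps $D\setminus\{x\}$ connected by Lemma~\ref{lem:leaf-connected-complement}, and notes that $\partial' D$ loses the $d-1$ outside neighbors of $x$ and gains $x$ itself. It then obtains the second equality separately, by counting outside neighbors of boundary vertices.

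You instead get both equalities from a single double count of $\partial_{\mathrm{edge}} D$. The size of $D$ enters only through the global identity $|E(D)|=|D|-1$ and the handshake lemma. This has several advantages:
\begin{enumerate}[(1)]
\item the proof is shorter and needs no induction;
\item it handles $|D|=1$ uniformly;
\item it gives the edge-boundary version of \eqref{eq.outerboundary} (the paper's subsequent remark) at no extra cost;
\item it makes explicit where Lemma~\ref{lem:outer-boundary-tree} is needed, namely that no outside vertex is counted twice.
\end{enumerate}
The paper relies on that last point only tacitly, both in its inductive step and in its proof of the second equality.

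The paper's leaf-removal induction is a deliberate choice, since it serves as a template for later arguments that strip leaves. Your identity $\sum_{x\in D}\deg_D(x)=2(|D|-1)$ is exactly the one the paper itself uses later, in the proof of Lemma~\ref{lem.tau-formula}.
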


\smallskip 

Although this identity is probably well known to experts and appears in \cite{CT}, we include a short proof, since the argument provides a useful template for several constructions that follow in the present paper.

\smallskip 

\begin{proof}
We argue by induction on $|D|$.  If $|D|=1$, then $\partial' D$ consists of the $d$ neighbors of the unique vertex of $D$, and the formula holds. Assume $|D|\ge 2$, and choose a leaf $x\in\partial D$.  
Then $x$ has one neighbor inside $D$ and $d-1$ neighbors outside.  
Let $D^{-}=D\setminus\{x\}$. 
Removing $x$ deletes its $d-1$ neighbors outside of $D$ from $\partial'D$, while $x$ itself becomes a vertex of $\partial' D^{-}$.
$$
    |\partial' D^{-}| = |\partial' D| - (d-1) + 1  = |\partial' D| - (d-2).
$$
On the other hand, by Lemma \ref{lem:leaf-connected-complement},  $D^{-}$ is connected, so we have from  the induction hypothesis:
$$
   |\partial' D^{-}|  =  (d-2)\,|D^{-}| + 2 = (d-2)(|D|-1) + 2.
$$
We thus conclude that 
$$
     |\partial' D|  =   |\partial' D^{-}| + (d-2)  = (d-2)(|D|-1) + 2 +(d-2)  = (d-2) |D|+ 2,
$$
proving the first equality in \eqref{eq.outerboundary}. 
For the second equality, observe that for each $x\in\partial D$, exactly $d-\deg_D(x)$ of its $d$ neighbors lie outside $D$.  
Summing over $x\in\partial D$ gives
\[
   |\partial' D| = \sum_{x\in\partial D} (d-\deg_D(x)).
\]
\end{proof}

\medskip

\begin{remark}
(i) In a regular tree, the outer vertex boundary $\partial' D$ and the edge boundary $\partial_{\mathrm{edge}} D$ are naturally in bijection. In particular, \eqref{eq.outerboundary} remains valid with $|\partial_{\mathrm{edge}} D|$ replacing $|\partial' D|$.

\smallskip

(ii) In the literature on finite trees, the term \emph{interior vertex} often refers to a vertex that is not a leaf.
In the present paper, since we work with domains $D$ in a tree, an interior vertex would more naturally be defined as a vertex all of whose neighbors belong to $D$. To avoid any ambiguity, we altogether avoid the terminology of interior vertex here. 
\end{remark}

\section{The isoperimetric profile of the regular tree} \label{sec.mainthm}

In this section, we compute the isoperimetric profile $I_d$ of the $d$-regular tree $T_d$. Recall that this is the function $I_d : \mathbb{N} \to \mathbb{N}$ defined by
\[
I_d(k) = \min \bigl\{ |\partial D| \,\big|\, D \subset T_d \text{ is finite, connected, and } |D| = k \bigr\},
\]
for all $k \in \mathbb{N}$.
Observe that $I_d(k) \leq k$  since $\partial D \subset D$. 
\begin{theorem}\label{thm:Id}
For every $d\ge 2$, the isoperimetric profile of $T_d$ is given by $I_d(1)=1$, and 
\begin{equation}\label{eq.isopofile}
   I_d(k)=\Bigl\lceil \frac{(d-2)k+2}{d-1}\Bigr\rceil,
\end{equation}
for $k\ge 2$.
\end{theorem}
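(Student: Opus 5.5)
The plan has two parts: a lower bound from Proposition~\ref{prop.outerboundary}, and a matching construction obtained by adding leaves one at a time. The case $k=1$ is trivial, since $\partial D=D$. For $k=2$ the formula gives $\lceil 2(d-1)/(d-1)\rceil=2$, which is correct: both vertices are leaves.

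\textbf{Lower bound.} Let $D$ be a domain with $|D|=k\ge 2$. Every vertex $x\in\partial D$ satisfies $\deg_D(x)\ge 1$, because $D$ is connected with at least two vertices. Hence $d-\deg_D(x)\le d-1$. Proposition~\ref{prop.outerboundary} then gives
\[
(d-2)k+2=\sum_{x\in\partial D}(d-\deg_D(x))\le (d-1)|\partial D|.
\]
Since $|\partial D|$ is an integer, it follows that $|\partial D|\ge \lceil ((d-2)k+2)/(d-1)\rceil$. Equality in the sum holds exactly when all boundary vertices are leaves, that is, for full domains. This suggests the optimal domains are full domains together with small defects.

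\textbf{Upper bound.} For each $k\ge 2$ I will exhibit a domain $D_k$ with $|\partial D_k|=\lceil((d-2)k+2)/(d-1)\rceil$. The construction is a greedy growth. Start from a two-vertex domain. At each step, pick a boundary vertex $x$ of $D_k$ and add one neighbor of $x$ lying outside $D_k$. If $x$ is a leaf, keep adding its outside neighbors to it until all $d-1$ of them are used, and only then move on to another leaf.

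To analyse the process, I track how $|\partial D|$ changes when a new vertex $y$ is attached to $x\in\partial D$. The new vertex $y$ always joins the boundary, so it contributes $+1$ (assuming $d\ge 3$). The vertex $x$ leaves the boundary exactly when $y$ fills its last free slot, contributing $-1$. Consequently, over one block of $d-1$ consecutive additions at a former leaf $x$, the boundary gains $d-1$ new vertices and loses $x$. The net increase is $d-2$ per $d-1$ added vertices.

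Write $k-2=q(d-1)+r$ with $0\le r<d-1$. Then:
\begin{itemize}
\item After $q$ complete blocks, the domain is full and its boundary has size $2+q(d-2)$, matching the formula exactly.
\item A partial block of $r\ge 1$ additions then gives boundary $2+q(d-2)+r$.
\end{itemize}

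The step I expect to be the main obstacle is the arithmetic check that $2+q(d-2)+r$ equals the ceiling. Note that at the first step the initial leaf $x$ remains in the boundary while $y$ is added, which is why the partial block contributes the full $+r$. Substituting $k=2+q(d-1)+r$, the ceiling becomes
\[
\left\lceil \frac{(d-2)q(d-1)+(d-2)(2+r)+2}{d-1}\right\rceil=(d-2)q+2+\left\lceil\frac{(d-2)r-(d-1)\cdot 0-\, r\cdot 0 + (d-2)\cdot 2+2-2(d-1)}{d-1}\right\rceil .
\]
The last numerator simplifies to $(d-2)r$. Since $1\le r\le d-2$, we have $\lceil (d-2)r/(d-1)\rceil=r$, because $r-1<r(d-2)/(d-1)\le r$. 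This gives exactly $2+q(d-2)+r$, as required.

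Finally, the case $d=2$ is checked directly: a path has two boundary vertices and the formula gives $\lceil 2/1\rceil=2$.
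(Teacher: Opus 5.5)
Your proof is correct and follows the paper's route. The lower bound is exactly Lemma~\ref{lem:Id-lower}, from Proposition~\ref{prop.outerboundary} together with $d-\deg_D(x)\le d-1$. Your greedy growth produces the same kind of extremal domain as the paper's $A_s\cup\partial' A_s\setminus Q$: a full domain plus one extra boundary vertex of degree $r+1\le d-1$ (your partial block of $r$ additions corresponds to removing $d-1-r$ outer neighbours of $x_s$). It rests on the same Euclidean division of $k-2$ by $d-1$; the only difference is that your incremental count treats all $k\ge 2$ uniformly instead of splitting into three cases.
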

Here $\lceil x \rceil$ is the ceiling function, that is  $m = \lceil x \rceil$ is the unique integer such that  $m-1 < x \leq m$.

\medskip 

The formula \eqref{eq.isopofile} already appears in \cite{CorreiaThesis2025}. For the convenience of the reader, we include here a self-contained proof, which relies on the following three lemmas.

\medskip 

\begin{lemma}\label{lem:Id-lower}
For every finite connected $D\subset T_d$ with $|D|\ge 2$ we have
\[
   |\partial D|\  \geq \frac{(d-2)|D|+2}{d-1}.
\]
\end{lemma}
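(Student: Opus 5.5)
The plan is to read the bound directly off Proposition~\ref{prop.outerboundary}. That proposition gives
\[
   (d-2)\,|D| + 2 \;=\; |\partial' D| \;=\; \sum_{x\in\partial D} \bigl(d-\deg_D(x)\bigr).
\]
It therefore suffices to bound each summand $d-\deg_D(x)$, for $x\in\partial D$, from above by $d-1$. Once we have that, we get
\[
   (d-2)|D|+2 \;\le\; (d-1)\,|\partial D|,
\]
and dividing by $d-1\ge 1$ gives the claimed inequality.

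The pointwise bound comes from the hypothesis $|D|\ge 2$. Since $D$ is connected and contains at least two vertices, every vertex $x\in D$ has at least one neighbor in $D$, so $\deg_D(x)\ge 1$. Hence $d-\deg_D(x)\le d-1$ for every $x\in\partial D$, with equality exactly at the leaves.

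I expect no real obstacle here. The only point needing care is the case $|D|=1$, which is excluded: there $\deg_D(x)=0$ and the summand equals $d$, which is why the theorem treats $I_d(1)=1$ separately. As a check on the argument, equality in the lemma holds precisely when every boundary vertex is a leaf, that is, when $D$ is a full domain. This matches the paper's announced role for full domains, and it will presumably be the ingredient for the matching upper bound in the subsequent lemmas.
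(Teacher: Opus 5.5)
Your proof is correct and is essentially the paper's argument. Both combine the identity $|\partial' D| = (d-2)|D|+2$ from Proposition~\ref{prop.outerboundary} with the observation that each boundary vertex has at most $d-1$ neighbors outside $D$ (since $\deg_D(x)\ge 1$ when $|D|\ge 2$), which gives $|\partial' D|\le (d-1)|\partial D|$; your equality characterization ($D$ full) also matches the equivalence (c)$\Leftrightarrow$(d) in Section~\ref{sec.fulldomains}.
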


\begin{proof}
Each boundary vertex has at most $(d-1)$ neighbors outside $D$, so 
\[
     |\partial' D|  \leq (d-1)|\partial D|.
\]
Combining this with Proposition~\ref{prop.outerboundary}, which states that $|\partial'D| =  (d-2)|D|+2$, yields the claim.
\end{proof}

The next two lemmas are purely arithmetical: 
\begin{lemma}\label{lem:smallk}
For $k\ge 2$ we have
\[
   k=\Bigl\lceil \frac{(d-2)k+2}{d-1}\Bigr\rceil  \quad\Longleftrightarrow\quad    2\le k\le d.
\]
\end{lemma}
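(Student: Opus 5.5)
We reduce the statement to an elementary inequality. By the definition of the ceiling function, $k=\lceil x\rceil$ with $x=\frac{(d-2)k+2}{d-1}$ holds if and only if
\[
   k-1 < \frac{(d-2)k+2}{d-1} \le k .
\]
We treat the two inequalities separately. Since $d\ge 2$, the quantity $d-1$ is positive, so we may multiply through by it without changing the direction of the inequalities.

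The right-hand inequality reads $(d-2)k+2\le (d-1)k$, which simplifies to $2\le k$. It therefore holds automatically under the standing assumption $k\ge 2$.

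The left-hand inequality reads $(k-1)(d-1) < (d-2)k+2$. Expanding gives
\[
   kd - k - d + 1 < kd - 2k + 2,
\]
which simplifies to $k < d+1$. Since $k$ is an integer, this is the same as $k\le d$.

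Combining the two, for $k\ge 2$ the equality $k=\lceil x\rceil$ holds exactly when $k\le d$, that is, when $2\le k\le d$. No step presents a real obstacle. The only points needing care are the positivity of $d-1$, which justifies the cross-multiplication, and the strictness of the lower bound in the ceiling definition, which is what turns $k<d+1$ into $k\le d$.
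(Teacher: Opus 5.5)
Your proof is correct and follows the paper's approach: both reduce the claim to $k-1<\frac{(d-2)k+2}{d-1}\le k$. The paper writes the quantity as $k-\frac{k-2}{d-1}$ so that both bounds become $0\le k-2<d-1$ at once, while you cross-multiply each inequality separately; this is the same computation.
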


\begin{proof}
A direct computation gives
\[
\frac{(d-2)k+2}{d-1}=k-\frac{k-2}{d-1},
\]
which lies in the interval $(k-1,k]$ precisely when $0\le k-2<d-1$, i.e.\ when $2\le k\le d$.
\end{proof}

\medskip 

\begin{lemma}\label{lem:division}
Let $d\ge 2$ and $k\ge d+1$, and set
\[
   s = \Bigl\lceil \frac{k-2}{d-1}\Bigr\rceil  \qquad \text{and} \qquad    q = (d-1)s-(k-2).
\]
Then $s\ge 1$ and $0\le q<d-1$.
\end{lemma}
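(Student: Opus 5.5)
The statement is purely arithmetical, so the plan is to unwind the definition of the ceiling function. Note first that $k\ge d+1$ forces $d\ge 2$ to give $d-1\ge 1$, so the quotient $\frac{k-2}{d-1}$ is well defined. Moreover $k-2\ge d-1>0$, so $\frac{k-2}{d-1}\ge 1$, and since the ceiling of a number $\ge 1$ is $\ge 1$, we get $s\ge 1$ immediately.

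For the bounds on $q$, I will use the characterization of the ceiling recalled after Theorem~\ref{thm:Id}: $s$ is the unique integer with
\[
   s-1<\frac{k-2}{d-1}\le s .
\]
Multiplying through by the positive number $d-1$ gives
\[
   (d-1)s-(d-1)<k-2\le (d-1)s .
\]
The right-hand inequality gives $q=(d-1)s-(k-2)\ge 0$. The left-hand inequality rearranges to $(d-1)s-(k-2)<d-1$, i.e.\ $q<d-1$. Since all quantities are integers, this can equivalently be written as $0\le q\le d-2$.

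There is no real obstacle here. The only point requiring care is to use the strict inequality $s-1<\frac{k-2}{d-1}$ for the strict upper bound $q<d-1$, and to note that $d-1>0$ so that multiplying preserves the direction of the inequalities. I would also remark, for later use in the construction of optimal domains, that the identity
\[
   k=(d-1)s+2-q
\]
follows by rearranging the definition of $q$. Presumably this is how the lemma will be applied: a domain with $s$ branching steps has its count corrected by $q$ missing leaves, so it is worth recording in that form.
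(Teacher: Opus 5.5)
Your proof is correct and follows the paper's argument essentially verbatim. Both deduce $s\ge 1$ from $\frac{k-2}{d-1}\ge 1$, and both obtain $0\le q<d-1$ by multiplying the ceiling inequalities $s-1<\frac{k-2}{d-1}\le s$ by $d-1>0$ and rearranging.
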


Note that this  lemma expresses the Euclidean division of $2-k$ by $d-1$, with remainder $q$ and quotient~$-s$.

\begin{proof}
By definition of $s$, we have
\[
s-1<\frac{k-2}{d-1}\le s.
\]
Since $k\ge d+1$, it follows that $\frac{k-2}{d-1}\ge 1$, hence $s\ge 1$. Multiplying the above inequalities by $d-1$ yields
\[
(d-1)(s-1)< k-2 \le (d-1)s,
\]
which can be rewritten as
\[
0\le q=(d-1)s-(k-2)<d-1.
\]
\end{proof}

\medskip 

The remainder of this section is devoted to the proof of Theorem~\ref{thm:Id}.

\begin{proof} 
The result is trivial for  $k=1$. The case $d = 2$ is also trivial since in that case both sides of  \eqref{eq.isopofile} are clearly equal to $2$ (when $k \geq 2$). For the rest of the proof we thus assume $d\geq 3$ and $k\ge 2$. 

\medskip 

 Lemma~\ref{lem:Id-lower} gives us 
\[
   I_d(k) \  \geq  \frac{(d-2)k+2}{d-1},
\]
and since the left-hand side is an integer, we have in fact
\[
   I_d(k) \  \geq  \Bigl\lceil \frac{(d-2)k+2}{d-1}\Bigr\rceil.
\]
To prove equality, we need to produce for each $k\ge 2$, an explicit domain $D\subset T_d$ with $|D|=k$ and
\begin{equation}\label{domoptimal1}
    |\partial D| = \Bigl\lceil \frac{(d-2)k+2}{d-1}\Bigr\rceil.
\end{equation}
We divide the construction  in the following three cases:
\begin{enumerate}[(i)]
\item $2\le k\le d$,
\item $k\geq d+1$ and  $k \equiv 2  \mod (d-1)$.
\item $k\geq d+1$ and  $k \not\equiv 2  \mod (d-1)$.
\end{enumerate}

Case (i): \  If $2\le k\le d$, any connected set $D$ of cardinality $k$ satisfies
$\partial D = D$.  Using  Lemma~\ref{lem:smallk}, we then have 
$$|\partial D| = k = \Bigl\lceil \frac{(d-2)k+2}{d-1}\Bigr\rceil.$$

\medskip 

Case (ii): \  We now assume  $k\geq d+1$ ($\geq 3$) and  $k \equiv 2  \mod (d-1)$, i.e. there exists an integer $s\geq 0$ such that $k-2 = (d-1)s$.
If  $s=0$, then $k=2$ and this case has been considered in the previous step. We therefore assume $s\geq 1$
and we consider the  domain $D\subset T_d$ defined as follows: 
$$
  D = A_s \cup \partial'A_s,
$$
where  $A_s = \{x_1,\dots,x_s\}$ is a path in $T_d$ (i.e. $x_i$ is adjacent to $x_{i+1}$ for $1\leq i \leq s-1$)  and $\partial'A_s$ is its outer boundary in $T_d$. A direct inspection shows that
$$
 \partial D = \partial'A_s, \quad   |\partial D| = |\partial'A_s| =  (d-2)s + 2,
$$
and
$$
  D =  A_s \cup \partial'A_s, \quad  k :=  |D| = s + (d-2)s + 2 = (d-1)s + 2.
$$
A   straightforward computation yields
$$
   \frac{(d-2)k+2}{d-1} = (d-2)s + 2,
$$
which is an integer, so
$$
   |\partial D| = (d-2)s + 2  = \frac{(d-2)k+2}{d-1}  = \Bigl\lceil \frac{(d-2)k+2}{d-1}\Bigr\rceil.
$$

\medskip 

Case (iii): \  We finally  assume  $k\geq d+1$ and  $k \not\equiv 2  \mod (d-1)$. By Lemma~\ref{lem:division} we can write
\begin{equation}\label{eq.k-division}
   k-2 = (d-1)s - q,
\end{equation}
with integers $s\ge 1$ and $1\le q<d-1$. 
In that case we consider the domain $D$ obtained by a modification of the previous construction. Again, let 
$A_s = \{x_1,\dots,x_s\}$ be a  path in $T_d$ and define now:
$$
 D = A_s\cup \partial' A_s\setminus Q,
$$
 where 
$Q$ is a set of $q$ external neighbors of $x_s$ (see Figure \ref{fig:example-D}). We then have
$$
   |D|    =  (d-1)s  + 2 - q   =:k.
$$
and
$$
   |\partial D| = |\partial'A_s| -q +1 =  (d-2)s + 3-q,
$$
since we have removed $q$ leaves compared to the previous construction, but now there is an additional boundary vertex  $x_s \in \partial D$. 

\smallskip 

Using  $k = (d-1)s +2- q$ and  $|\partial D| = (d-2)s - q + 3$,  we compute
\begin{align*}
 \frac{(d-2)k+2}{d-1} &=\frac{(d-2)\bigl((d-1)s+2-q\bigr)+2}{d-1} =(d-2)s+\frac{2(d-1)-(d-2)q}{d-1}
\\ &=  |\partial D| -1+\frac{q}{d-1},
\end{align*}
and since $1\le q<d-1$, we have $0< \frac{q}{d-1}<1$, therefore 
\[
   |\partial D| - 1  < \frac{(d-2)k+2}{d-1}  < |\partial D|;
\]
equivalently 
\[
   \Bigl\lceil \frac{(d-2)k+2}{d-1}\Bigr\rceil  = |\partial D|.
\]
In all cases we have constructed a domain $D$ with $|D|=k$ and $|\partial D| = \bigl\lceil \frac{(d-2)k+2}{d-1}\bigr\rceil$;  this completes the proof.
\end{proof}

\medskip 

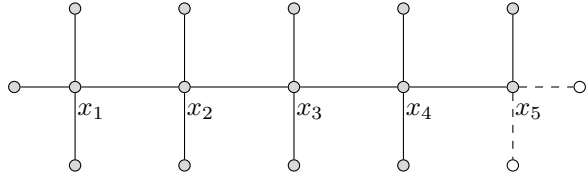
\begin{figure}[h]
\centering
\begin{tikzpicture}[
  scale=0.8,
  inD/.style={circle,draw,fill=black!15,inner sep=1.5pt},
  outD/.style={circle,draw,fill=white,inner sep=1.5pt},
  every node/.style={font=\small}
]

\node[inD, label={[xshift=6pt]  below  :$x_1$}] (x1) at (0,0) {};
\node[inD, label={[xshift=6pt]below:$x_2$}] (x2) at (1.8,0) {};
\node[inD, label={[xshift=6pt]below:$x_3$}] (x3) at (3.6,0) {};
\node[inD, label={[xshift=6pt]below:$x_4$}] (x4) at (5.4,0) {};
\node[inD, label={[xshift=6pt]below:$x_5$}] (x5) at (7.2,0) {};

\node[inD] (a1) at (0,1.3) {};
\node[inD] (a2) at (0,-1.3) {};
\node[inD] (a3) at (-1,0)     {};

\node[inD] (b1) at (1.8,1.3) {};
\node[inD] (b2) at (1.8,-1.3) {};

\node[inD] (c1) at (3.6,1.3) {};
\node[inD] (c2) at (3.6,-1.3) {};

\node[inD] (d1) at (5.4,1.3) {};
\node[inD] (d2) at (5.4,-1.3) {};

\node[inD]  (e1) at (7.2,1.3) {};    
\node[outD] (m1) at (7.2,-1.3) {};   
\node[outD] (m2) at (8.3,0)    {};   

\draw (x1) -- (x2) -- (x3) -- (x4) -- (x5);

\draw (x1) -- (a1);
\draw (x1) -- (a2);
\draw (x1) -- (a3);

\draw (x2) -- (b1);
\draw (x2) -- (b2);

\draw (x3) -- (c1);
\draw (x3) -- (c2);

\draw (x4) -- (d1);
\draw (x4) -- (d2);

\draw (x5) -- (e1);
\draw[dashed] (x5) -- (m1);
\draw[dashed] (x5) -- (m2);
\end{tikzpicture}
\caption{\small
The domain $D$ constructed in the last step of the proof, for the case $d = 4$, $s = 5$ and $q = 2$. 
(Thus $|D| = 15$ and $|\partial D| = I_4(15) = 11$.)
}
\label{fig:example-D}
\end{figure}

\subsubsection*{On the Cheeger constant}\label{subsec.cheeger}

The \emph{inner Cheeger constant} of an infinite graph $X$ is defined as
\[
 h_{\mathrm{in}}(X) = \inf\left\{    \frac{|\partial D|}{|D|}    \;\middle|\;    D\subset X \text{ finite, connected, and non-empty}
 \right\}
 = \inf_{k\in \mathbb{N}} \;  \frac{I_X(k)}{k}.
\]
For the regular tree  $X = T_d$,  we obtain the following value:
$$  
h_{\mathrm{in}}(T_d) = \frac{d-2}{d-1}.
$$
Indeed, applying Theorem~\ref{thm:Id}, we obtain  
$$
 \frac{I_{T_d}(k)}{k} = \frac{1}{k} \Bigl\lceil \frac{(d-2)k+2}{d-1}\Bigr\rceil.
$$
Therefore
$$
\frac{d-2}{d-1} \le \frac{I_{T_d}(k)}{k} < \frac{d-2}{d-1} + \frac{2}{(d-1)k} + \frac{1}{k}.
$$
It follows that
$$
 \frac{d-2}{d-1} \leq h_{\mathrm{in}}(T_d) = \inf_{k\in\mathbb N}\frac{I_{T_d}(k)}{k}
 \leq \lim_{k\to\infty}\frac{I_{T_d}(k)}{k} = \frac{d-2}{d-1},
$$
which proves the claim.

 \medskip 

Using the outer and edge  boundaries, we similarly define  the \textit{outer} and \textit{edge} Cheeger constants $h_{\mathrm{out}}(X)$ and $h_{\mathrm{edge}}(X)$ (the latter is also referred to as the \textit{expansion constant} of the graph).  Applying Proposition~\ref{prop.outerboundary}, we see that 
$$
   h_{\mathrm{edge}}(T_d)  = h_{\mathrm{out}}(T_d) = d-2.
$$
This value is  classical, it appears e.g. as Exercise 6.1 in \cite{LyonsPeres2016}.
We are not aware of an explicit reference for the above value of $h_{\mathrm{in}}(T_d)$ in the literature.

\section{Optimal domains}
\label{sec.optimaldomains}

A domain $D\subset T_d$ is said to be \emph{isoperimetrically optimal}, or simply an \emph{optimal domain},
if it has minimal boundary among all domains of the same size, that is,
\[
|\partial D| = I_d(|D|).
\]

As a first example, consider the ball $B(x,r)\subset T_d$ of radius $r\ge 2$ centered at some vertex $x$. 

A straightforward induction shows that
\[
|\partial B(x,r)| = d(d-1)^{r-1}, \qquad |B(x,r)| = 1 + d\,\frac{(d-1)^r - 1}{d-2}.
\]
One readily checks that
\[
|\partial B(x,r)| = \frac{(d-2)|B(x,r)| + 2}{d-1} = I_d\left( |B(x,r)| \right), 
\]
therefore the ball $B(x,r)$ is an optimal domain; but, as we shall see now, the class of optimal domains is much richer than the family of balls. To characterize them, we introduce the following invariant:
\begin{definition}
The \emph{boundary branching excess} of a domain $D\subset T_d$ of cardinality at least 2 is defined as 
\begin{equation}\label{def.tau}
  \tau(D) = \sum_{x\in\partial D} \bigl(\deg_D(x)-1\bigr).
\end{equation}
Observe that in fact 
\begin{equation}\label{def.taubis}
  \tau(D) = \sum_{x\in R(D)} \bigl(\deg_D(x)-1\bigr),
\end{equation}
since $\deg_D(x) = 1$ for any $x\in L(D)$.
\end{definition}

\medskip 

\begin{lemma}\label{lem.tau-formula}
Let $D\subset T_d$ be a finite  domain with $|D| \geq 2$. Then
\begin{equation}\label{eq.tau-formula}
  \tau(D)  = (d-1)\,|\partial D| - (d-2)\,|D| - 2.
\end{equation}
Furthermore 
\begin{equation}\label{inequtau1}
  |R(D)| \leq  \tau(D)  \leq |D| -2.
\end{equation}
Moreover, the equality $\tau(D)  =  |D| -2$ holds  if and only if $\partial D=D$.
\end{lemma}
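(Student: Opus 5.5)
The identity \eqref{eq.tau-formula} follows from the second equality in Proposition~\ref{prop.outerboundary}. Expanding
\[
(d-2)|D|+2=\sum_{x\in\partial D}(d-\deg_D(x))
\]
and writing $d-\deg_D(x)=(d-1)-(\deg_D(x)-1)$ gives
\[
(d-2)|D|+2=(d-1)|\partial D|-\tau(D),
\]
which rearranges to \eqref{eq.tau-formula}.

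For the lower bound in \eqref{inequtau1}, use the form \eqref{def.taubis}. Every $x\in R(D)$ has $\deg_D(x)\ge 2$, so each term $\deg_D(x)-1$ is at least $1$. Summing over $R(D)$ gives $\tau(D)\ge|R(D)|$.

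For the upper bound, I will use the handshake identity in the finite tree $D$:
\[
\sum_{x\in D}\deg_D(x)=2(|D|-1),
\]
hence
\[
\sum_{x\in D}(\deg_D(x)-1)=|D|-2.
\]
Since $|D|\ge 2$ and $D$ is connected, every vertex has $\deg_D(x)\ge 1$, so every term $\deg_D(x)-1$ is nonnegative. Therefore
\[
\tau(D)=\sum_{x\in\partial D}(\deg_D(x)-1)=|D|-2-\sum_{x\in D\setminus\partial D}(\deg_D(x)-1)\le |D|-2.
\]

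For the equality case, I will use the same decomposition. Equality holds exactly when the omitted sum $\sum_{x\in D\setminus\partial D}(\deg_D(x)-1)$ vanishes. A vertex $x\notin\partial D$ has $\deg_D(x)=d$, so its term equals $d-1$. The step needing care is the edge case $d=2$, where $d-1=1$ and the terms are still positive; so the argument is uniform for all $d\ge 2$. Hence the omitted sum vanishes if and only if $D\setminus\partial D=\emptyset$, that is, $\partial D=D$.

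No step is a real obstacle. The only point to handle carefully is that all terms in the handshake sum are nonnegative, which uses $|D|\ge 2$ to exclude isolated vertices.
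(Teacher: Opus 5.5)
Your proof is correct and follows essentially the same route as the paper: the identity comes from the second equality of Proposition~\ref{prop.outerboundary} via $d-\deg_D(x)=(d-1)-(\deg_D(x)-1)$, and the bounds come from \eqref{def.taubis} and the handshake identity $\sum_{x\in D}\deg_D(x)=2(|D|-1)$ in the tree $D$. Your observation that each vertex $x\in D\setminus\partial D$ contributes $d-1\ge 1$ makes the equality case explicit, where the paper only states it.
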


\smallskip 

Observe that in \eqref{eq.tau-formula}, for domains of fixed cardinality  $k \geq 2$ the boundary size $|\partial D|$ is  minimized exactly when $\tau(D)$ is minimized. In other words, isoperimetric optimality is equivalent to minimality of $\tau(D)$ among domains of the same size.

\begin{proof}   Every boundary vertex $x\in\partial D$ has $d-\deg_D(x)$
neighbors outside $D$, so
\[
  |\partial' D| = \sum_{x\in\partial D} (d-\deg_D(x))
    = \sum_{x\in\partial D} \bigl[(d-1) - (\deg_D(x)-1)\bigr]
    = (d-1)\,|\partial D| - \tau(D),
\]
hence \eqref{eq.tau-formula} follows from Proposition \ref{prop.outerboundary}.
The inequality $|R(D)| \leq  \tau(D)$ follows immediately from \eqref{def.taubis} since each term in the sum is $\geq1$.  To prove the inequality   $\tau(D)  \leq |D| -2$, we use that $ \chi(D) = 1$, hence 
$$
 \sum_{x\in D}\deg_D(x)=2|E(D)|=2(|D|-1);
$$
therefore  
$$
 \tau(D)   = \sum_{x\in \partial D} (\deg_D(x)-1) \leq \sum_{x\in D}\bigl(\deg_D(x)-1\bigr) =2(|D|-1)-|D| =|D|-2,
$$
with equality  if and only if $\partial D=D$.
\end{proof}

\medskip 

Using the boundary branching excess, we state a convenient optimality criterion: 
\begin{theorem}\label{CritereIsop}
A domain $D\subset T_d$ with $|D|\ge 2$ is isoperimetrically optimal,   i.e.\ $|\partial D|=I_d(|D|)$,  if and only if
\begin{equation}\label{ineq:tau-optimal}
   \tau(D) \leq  d-2.
\end{equation}
\end{theorem}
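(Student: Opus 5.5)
The plan is to reduce everything to the identity of Lemma~\ref{lem.tau-formula} combined with the exact profile of Theorem~\ref{thm:Id}. Set $k=|D|\ge 2$ and $N=(d-2)k+2$. By \eqref{eq.tau-formula},
\[
   (d-1)\,|\partial D| = N + \tau(D),
\]
so $|\partial D| = \bigl(N+\tau(D)\bigr)/(d-1)$ with $\tau(D)\ge 0$. In particular $N+\tau(D)$ is divisible by $d-1$, and $\tau(D)\equiv -N \pmod{d-1}$ is the same residue class for all domains of size $k$. Theorem~\ref{thm:Id} then gives $I_d(k)=\lceil N/(d-1)\rceil$.

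The key step is to observe that $\lceil N/(d-1)\rceil$ is precisely $(N+r)/(d-1)$, where $r\in\{0,1,\dots,d-2\}$ is the unique representative of $-N$ modulo $d-1$ in that range. Hence $D$ is optimal if and only if $\tau(D)=r$.

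Since $\tau(D)\ge 0$ lies in the residue class of $r$, we have $\tau(D)=r$ if and only if $\tau(D)\le d-2$. Indeed, the only nonnegative element of that class in $[0,d-2]$ is $r$, and any other nonnegative element is at least $r+(d-1)>d-2$. This proves both directions at once. The case $d=2$ is degenerate: the condition reads $\tau(D)\le 0$, every class mod $1$ is trivial, and $r=0$. There, $\tau(D)=0$ means $D$ is a path with both endpoints leaves, which always holds since $|\partial D|=2$. So the argument still goes through, perhaps with a one-line remark.

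The only point requiring care is the ceiling identity: $\lceil N/(d-1)\rceil=(N+r)/(d-1)$ with $0\le r\le d-2$ and $r\equiv -N$. This is elementary, since $(N+r)/(d-1)$ is an integer in $[N/(d-1),\,N/(d-1)+(d-2)/(d-1)]$. Alternatively, one can phrase it directly: $|\partial D|=I_d(k)$ if and only if $|\partial D|-1<N/(d-1)$, which by the identity above is equivalent to $\tau(D)<d-1$. This second phrasing is cleaner and avoids residues altogether. I would present it this way: optimality means $N/(d-1)\le|\partial D|<N/(d-1)+1$, where the left inequality is Lemma~\ref{lem:Id-lower}; multiplying by $d-1$ and using \eqref{eq.tau-formula} gives $0\le\tau(D)<d-1$.
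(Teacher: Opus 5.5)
Your final argument is the paper's proof. Both use Lemma~\ref{lem.tau-formula} to write $|\partial D| = A + \tau(D)/(d-1)$ with $A=((d-2)|D|+2)/(d-1)$, then observe that the integer $|\partial D|\ge A$ equals $\lceil A\rceil$ iff $0\le \tau(D)/(d-1)<1$, i.e.\ iff $\tau(D)\le d-2$. Your residue-class variant is also correct and additionally shows that every optimal domain of size $k$ has the same value $\tau(D)=r$, but it is not needed.
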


\medskip 

\begin{proof}
By Theorem~\ref{thm:Id}, the domain $D$ is optimal if and only if
\[
 |\partial D| = \lceil A\rceil,
 \quad \text{where} \quad
 A := \frac{(d-2)|D|+2}{d-1}.
\]
By Lemma~\ref{lem.tau-formula}, we have
\[
  |\partial D|   =   A + \frac{\tau(D)}{d-1}.
\]
Since $\tau(D)\ge 0$, we have $|\partial D|\ge A$, and hence
$|\partial D|=\lceil A\rceil$ if and only if
\[
 0 \le \frac{\tau(D)}{d-1} < 1.
\]
As $\tau(D)$ is a non-negative integer, this is equivalent to
$\tau(D)\le d-2$, which completes the proof.
\end{proof}

\medskip  

Our next result provides some simple examples of optimal domains: 
\begin{corollary}\label{CorcritereIsop}
\begin{enumerate}[(a)]
\item Every domain of size $k\le d$ is optimal.
\item Every full domain is optimal. 
\item If  $|R(D)| = 1$, then $D$ is optimal.
\item If $D$ is optimal, then $|R(D)|\le d-2$.
\end{enumerate}
\end{corollary}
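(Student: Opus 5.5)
All four items follow from Theorem~\ref{CritereIsop}, so the plan is to bound $\tau(D)$ in each case, using the formula \eqref{def.taubis} and the inequalities \eqref{inequtau1} of Lemma~\ref{lem.tau-formula}. Since $\tau(D)$ is only defined for $|D|\ge 2$, domains with $|D|=1$ in item (a) are handled separately: they are trivially optimal, because $I_d(1)=1=|\partial D|$.

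For (a), take $2\le |D|=k\le d$. The upper bound in \eqref{inequtau1} gives $\tau(D)\le |D|-2\le d-2$, so Theorem~\ref{CritereIsop} shows that $D$ is optimal. Alternatively, Lemma~\ref{lem:smallk} together with $\partial D\subset D$ gives the same conclusion directly.

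For (b), a full domain satisfies $R(D)=\emptyset$. By \eqref{def.taubis} the sum defining $\tau(D)$ is then empty, so $\tau(D)=0\le d-2$ (recall $d\ge 2$), and $D$ is optimal.

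For (c), if $R(D)=\{x\}$, then \eqref{def.taubis} reduces to the single term $\tau(D)=\deg_D(x)-1$. By the definition of $R(D)$ we have $\deg_D(x)\le d-1$, hence $\tau(D)\le d-2$, and $D$ is optimal. This is the step that actually uses the defining constraint of the residual boundary, namely $\deg_D(x)\le d-1$ because $x$ is a boundary vertex.

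For (d), if $D$ is optimal with $|D|\ge 2$, Theorem~\ref{CritereIsop} gives $\tau(D)\le d-2$. The lower bound in \eqref{inequtau1} then yields $|R(D)|\le\tau(D)\le d-2$. If $|D|=1$, then $R(D)$ is empty (or undefined, since the leaf/residual decomposition was only introduced for $|D|\ge 2$), so the claim holds vacuously.

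None of these steps is a real obstacle; the only care needed is the bookkeeping for $|D|=1$ and the observation in (c) that residual vertices have $\deg_D(x)\le d-1$.
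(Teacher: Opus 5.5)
Your proof is correct and matches the paper's argument essentially line by line: each item reduces to Theorem~\ref{CritereIsop} through \eqref{def.taubis} and the bounds \eqref{inequtau1}, and (a) also follows from step (i) of the proof of Theorem~\ref{thm:Id}. Your separate treatment of the case $|D|=1$, where $\tau$ is not defined, is a small piece of bookkeeping that the paper leaves implicit.
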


\medskip 

\begin{proof}
(a) That every domain of size $k\le d$ is optimal has been established in step (i) of the proof  of Theorem~\ref{thm:Id}.
It also follows from  the previous Theorem together with Lemma \ref{lem.tau-formula}. 

\medskip 

(b) For a full domain $D$,  we have $R(D) = \emptyset$ and hence $\tau(D)=0$ 
by \eqref{def.taubis}. Therefore $D$ is optimal.

\medskip 

(c)  If $R(D)=\{x\}$, then $\tau(D)=\deg_D(x)-1\le d-2$.

\medskip 

(d) Combining \eqref{inequtau1} and \eqref{ineq:tau-optimal}, we see if $D$ is an optimal domain, then  $|R(D)|\leq  \tau(D) \leq  d-2$.
\end{proof}

\bigskip 

We now describe a more elaborate example. 

\medskip

Consider the domain $D \subset T_5$  in Figure~\ref{figaa} below. The boundary of $D$ consists of   $12$ leaves and the residual vertex $p$, which is of degree $3$. Hence $R(D) = \{p\}$ and $\tau(D) = 2 \leq 5-2$ hence  $D$ is an optimal domain.
This can also be verified by direct computations: we see from the picture that  $|D| = 16$ and $|\partial D| = 13$. With $d=5$ and $k=16$, Theorem~\ref{thm:Id} gives
\[
   I_5(|D|) = I_5(16) = \left\lceil \frac{(5-2)\cdot 16 + 2}{5-1} \right\rceil
   = \left\lceil \frac{50}{4} \right\rceil
   = 13 = |\partial D|, 
\]
confirming  that $D$ is indeed optimal.

\medskip

Now consider the domain $D \subset T_5$ in Figure~\ref{figab}, which can be viewed as a modification of the previous example where one vertex has been moved. The residual boundary now consists of two points $p,q$, each of degree $4$.
Hence $\tau(D) = 3+3 = 6 \not\leq 5-2$.  This shows that $D$ cannot be optimal.

Again, this can be seen by directly counting the vertices. We have again $|D| = 16$ but now  $|\partial D| = 14$,
which is strictly larger than $I_5(16) = 13$. 

\medskip

\begin{figure}[H]
\centering
\begin{minipage}{0.48\textwidth}
\centering
\begin{tikzpicture}[scale=0.8,
  vertex/.style={circle,fill=black,inner sep=1.2pt},
  every path/.style={thick}
]
\node[vertex,label=below:$p$] (p) at (0,0) {};

\node[vertex] (c1) at (2,0) {};
\node[vertex] (c2) at (-1,1.7) {};
\node[vertex] (c3) at (-1,-1.7) {};

\draw (p)--(c1) (p)--(c2) (p)--(c3);

\node[vertex] (a11) at (3,0) {};
\node[vertex] (a12) at (2.7,0.9) {};
\node[vertex] (a13) at (2.7,-0.9) {};
\node[vertex] (a14) at (1.2,1) {};
\draw (c1)--(a11) (c1)--(a12) (c1)--(a13) (c1)--(a14);

\node[vertex] (a21) at (-2,2.2) {};
\node[vertex] (a22) at (-0.3,2.7) {};
\node[vertex] (a23) at (-1.8,0.9) {};
\node[vertex] (a24) at (0,1.1) {};
\draw (c2)--(a21) (c2)--(a22) (c2)--(a23) (c2)--(a24);

\node[vertex] (a31) at (-2,-2.2) {};
\node[vertex] (a32) at (-0.3,-2.7) {};
\node[vertex] (a33) at (-1.8,-0.9) {};
\node[vertex] (a34) at (0,-1.1) {};
\draw (c3)--(a31) (c3)--(a32) (c3)--(a33) (c3)--(a34);
\end{tikzpicture}

\caption{An optimal domain in $T_5$ with\\ $|D|=16$, $|\partial D|=13$, and $\tau(D)=2$.}
\label{figaa}
\end{minipage}
\hfill
\begin{minipage}{0.48\textwidth}
\centering
\begin{tikzpicture}[scale=0.8,
  vertex/.style={circle,fill=black,inner sep=1.2pt},
  every path/.style={thick}
]

\node[vertex] (c1) at (2,0) {};
\node[vertex] (c2) at (-1,1.7) {};
\node[vertex] (c3) at (-1,-1.7) {};

\draw (p)--(c1) (p)--(c2) (p)--(c3);

\node[vertex] (a23) at (-1.8,0.9) {};

\draw (p)--(a23);

\node[vertex] (a11) at (3,0) {};
\node[vertex] (a12) at (2.7,0.9) {};
\node[vertex] (a13) at (2.7,-0.9) {};
\node[vertex] (a14) at (1.2,1) {};
\draw (c1)--(a11) (c1)--(a12) (c1)--(a13) (c1)--(a14);

\node[vertex] (a21) at (-2,2.2) {};
\node[vertex] (a22) at (-0.3,2.7) {};
\node[vertex] (a24) at (0,1.1) {};
\draw (c2)--(a21) (c2)--(a22) (c2)--(a24);

\node[vertex] (a31) at (-2,-2.2) {};
\node[vertex] (a32) at (-0.3,-2.7) {};
\node[vertex] (a33) at (-1.8,-0.9) {};
\node[vertex] (a34) at (0,-1.1) {};
\draw (c3)--(a31) (c3)--(a32) (c3)--(a33) (c3)--(a34);

\node[vertex,label=below:$p$] (p) at (0,0) {};
\node[vertex,label=right:$q$] at (c2) {};

\end{tikzpicture}
\caption{A non-optimal domain in $T_5$ \\ with $|D|=16$, $|\partial D|=14$, and $\tau(D)=6$.}
\label{figab}
\end{minipage}
\end{figure}

\section{On full domains}
\label{sec.fulldomains}

Recall that a domain $D\subset T_d$ is full  if every boundary vertex is a leaf. Full domains are isoperimetrically optimal and play a central role in the sequel. In this section we give several equivalent characterizations in the case $|D|\ge 3$.

\begin{theorem}
Let $D\subset T_d$ be a domain with $|D|\ge 3$ and $d\ge 2$.  
The following conditions are equivalent:
\begin{enumerate}[(a)]
\item $D$ is full, i.e.\ $\partial D = L(D)$;
\item $R(D)=\emptyset$;
\item $\tau(D)=0$;
\item $|\partial D|  =   \dfrac{(d-2)\,|D| + 2}{d-1}$;
\item $D$ is optimal and $|D| \equiv 2 \pmod{d-1}$;
\item $D\setminus \partial D$ is connected;
\item there exists a domain $D_1\subset D$ such that  
      $D = D_1 \cup \partial' D_1$.
\end{enumerate}
\end{theorem}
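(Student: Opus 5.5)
The plan is to organize the equivalences in two clusters. The first cluster, (a)--(e), is essentially numerical. The second, (a),(f),(g), is structural. Throughout I assume $d\ge 3$: for $d=2$ the statement degenerates, since $T_2$ is a path, every domain of size $\ge 3$ is full, and all conditions hold trivially. Note that (e) needs care when $d=2$, because then the congruence is modulo $1$.

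\textbf{Numerical cluster.} The equivalence (a)$\Leftrightarrow$(b) is the definition, since $\partial D=L(D)\sqcup R(D)$. For (b)$\Leftrightarrow$(c), use \eqref{def.taubis}: each term $\deg_D(x)-1$ with $x\in R(D)$ is $\ge 1$, so $\tau(D)=0$ iff $R(D)=\emptyset$. This is exactly the inequality $|R(D)|\le\tau(D)$ of Lemma~\ref{lem.tau-formula}. Rewriting \eqref{eq.tau-formula} as
\[
|\partial D|=\frac{(d-2)|D|+2}{d-1}+\frac{\tau(D)}{d-1}
\]
gives (c)$\Leftrightarrow$(d) immediately. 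For (d)$\Rightarrow$(e), integrality of $|\partial D|$ forces $(d-1)\mid (d-2)|D|+2$. Since $d-2\equiv -1 \pmod{d-1}$, this says $|D|\equiv 2\pmod{d-1}$. Optimality then follows from Theorem~\ref{CritereIsop}, because $\tau(D)=0$. Conversely, for (e)$\Rightarrow$(c), the congruence makes $A=\frac{(d-2)|D|+2}{d-1}$ an integer, so optimality means $|\partial D|=\lceil A\rceil=A$. Hence $\tau(D)=0$.

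\textbf{Structural cluster.} For (a)$\Rightarrow$(g), set $D_1=D\setminus\partial D$. I first check that $D_1\neq\emptyset$: if $D_1$ were empty, then every vertex of $D$ would be a leaf of the tree $D$, forcing $|D|=2$. Next, $D_1$ is connected. The unique path in $D$ between two vertices of $D_1$ passes only through vertices of degree $\ge 2$ in $D$. By fullness, such vertices are non-boundary, so the path lies in $D_1$. This proves (a)$\Rightarrow$(f) as well. It remains to show $D=D_1\cup\partial' D_1$. Every vertex of $D_1$ has all $d$ of its neighbours in $D$, so $\partial' D_1\subset D$. Each leaf $x\in\partial D$ is adjacent to exactly one vertex $y\in D$. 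That vertex $y$ is not a leaf, since otherwise $D=\{x,y\}$ and $|D|=2$. So $y\in D_1$, and hence $x\in\partial' D_1$.

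For (g)$\Rightarrow$(a), suppose $D=D_1\cup\partial'D_1$. Every vertex of $D_1$ has all its neighbours in $D$, so it is not in $\partial D$. By Lemma~\ref{lem:outer-boundary-tree}, each $x\in\partial'D_1$ has exactly one neighbour in $D_1$. Its other neighbours do not lie in $D_1$, and they do not lie in $\partial'D_1$ either, because $\partial'D_1$ is independent. Hence $\deg_D(x)=1$, and since $d\ge 2$ this $x$ is a boundary leaf. Therefore $\partial D=\partial'D_1=L(D)$.

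\textbf{Main obstacle: (f)$\Rightarrow$(a).} Assume $D\setminus\partial D$ is connected, and suppose for contradiction that some $x\in R(D)$ exists, so $\deg_D(x)\ge 2$. First I show that $D\setminus\partial D\neq\emptyset$. If it were empty, then $\partial D=D$, and I would need to exclude this case. The hypothesis ``connected'' presumably presupposes non-emptiness, and I would state this reading explicitly. Under this reading, the case $|D|\ge 3$ with $\partial D=D$ (for example a path of length $3$ when $d\ge 3$) would otherwise give a counterexample. Now remove $x$ from the tree $D$. The components of $D\setminus\{x\}$ are at least two subtrees. Each such subtree contains a leaf of $D$ other than $x$, but I need non-boundary vertices in two different branches. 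Here is the argument I would try first. Any non-boundary vertex $z$ has all $d$ neighbours in $D$. Since $D\setminus\partial D$ is connected and avoids $x$, all non-boundary vertices lie in a single branch $B$ at $x$. A neighbour $y\in D$ of $x$ outside $B$ therefore lies in a branch consisting entirely of boundary vertices. That subtree still has to satisfy the counting of Proposition~\ref{prop.outerboundary}, and I would derive a contradiction from this. If the branch argument fails, I would fall back on a direct count: combine $\tau(D)=(d-1)|\partial D|-(d-2)|D|-2$ with the identity $|D\setminus\partial D|\ge 1$ to show that a nonzero $\tau$ forces disconnection. I expect this implication to be the delicate step, and it may require the non-emptiness convention for (f) to be made explicit.
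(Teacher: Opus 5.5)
Your arguments for (a)--(e), for (a)$\Rightarrow$(f), and for (a)$\Leftrightarrow$(g) are correct and follow the paper closely. Your path argument for the connectedness of $D\setminus\partial D$ is in fact cleaner than the paper's ``remove leaves repeatedly''. The gap is (f)$\Rightarrow$(a), which you leave open. It cannot be closed by the branch-counting idea or by any other argument, because the implication is false even when (f) is read as including non-emptiness.

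Take $d=3$, a vertex $z$ together with its three neighbours $a,b,c$, and one further neighbour $a_1$ of $a$. So $D=\{z,a,b,c,a_1\}$ and $|D|=5$. Only $z$ has all its neighbours in $D$, so $D\setminus\partial D=\{z\}$ is non-empty and connected. Yet $\deg_D(a)=2\le d-1$, so $a\in R(D)$ and $D$ is not full; indeed $\tau(D)=(d-1)|\partial D|-(d-2)|D|-2=8-5-2=1$. The branch $\{a_1\}$ at $a$ is precisely your ``branch consisting entirely of boundary vertices'', and Proposition~\ref{prop.outerboundary} says nothing against it. The same example works for every $d\ge 3$: take a ball of radius one and attach a new vertex to one of its leaves. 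You were right that the empty case must be excluded, but that is not the only obstruction. The paper's one-line justification of (a)$\Leftrightarrow$(f) also only establishes (a)$\Rightarrow$(f), so it does not supply the missing step either.

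What your branch argument does prove is a strengthened converse: if $D\setminus\partial D$ is non-empty, connected, \emph{and every vertex of $\partial D$ has a neighbour in $D\setminus\partial D$}, then $D$ is full. Indeed, a residual vertex $x$ would then have a neighbour $u\in D\setminus\partial D$ and a second neighbour $v\in D$. Either $v$ itself or some neighbour $w\neq x$ of $v$ lies in $D\setminus\partial D$. The tree path from $u$ to that vertex passes through $x\notin D\setminus\partial D$, contradicting connectedness. This extra hypothesis is exactly what your (a)$\Rightarrow$(g) argument verifies, and the strengthened condition is just (g) with $D_1=D\setminus\partial D$. So the correct repair is to prove only (a)$\Rightarrow$(f), or to replace (f) by this strengthened condition, rather than to look for a counting contradiction.
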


\medskip 

\begin{remark}
The hypothesis $|D|\ge 3$ cannot be removed.  
When $|D|=2$, the domain $D$ is always full, so conditions (a)--(f) hold trivially, but condition (g) fails.
Note also that the theorem is trivially true when $d=2$. In fact every domain  in $T_2$ is full. 
\end{remark}

\medskip

\begin{proof} 
The equivalences (a) $\Leftrightarrow$ (b) $\Leftrightarrow$ (c) follow directly from the definitions of $L(D)$, and $\tau(D)$; and 
(c)  $\Leftrightarrow$ (d) follows from the identity \eqref{eq.tau-formula}, which says that 
\begin{equation}\label{eq.boundary-expression2}
    |\partial D| = \frac{(d-2)|D|+2}{d-1} + \frac{\tau(D)}{d-1}.
\end{equation}
The following implications remain to be proved:
$$
 (d)   \Rightarrow  (e)  \Rightarrow  (c),   \qquad    (a) \Leftrightarrow  (f) ,      \qquad      (a)\Leftrightarrow  (g).
$$

\smallskip

(d) $\Rightarrow$ (e).  
If (d) holds then, by Theorem~\ref{thm:Id}, we have $|\partial D| = I_d(|D|)$, so $D$ is optimal.  
Moreover  since  $(d-2)\equiv -1 \pmod{d-1}$, we have
$$
   -|D|+2 \equiv (d-2)|D|+2 \equiv (d-1) |\partial D|  \equiv 0 \pmod{d-1}.
$$
Hence $|D| \equiv 2 \pmod{d-1}$.

\smallskip

(e) $\Rightarrow$ (c).  
Using \eqref{eq.tau-formula} or \eqref{eq.boundary-expression2}, and computing modulo $(d-1)$, we have
\[
\tau(D)\equiv -(d-2)|D|-2 \equiv |D|-2 \pmod{d-1}.
\]
Thus the hypothesis $|D| \equiv 2 \pmod{d-1}$ implies $\tau(D)\equiv 0\pmod{d-1}$.  On the other hand, since $D$  is optimal, Theorem~\ref{CritereIsop} gives $0\le \tau(D)\le d-2$. It follows that $\tau(D)=0$.

\smallskip

(a) $\Leftrightarrow$ (f).  
By Lemma~\ref{lem:leaf-connected-complement}, removing any leaf of a domain does not disconnect the remainder.  
Applying this repeatedly shows that $D\setminus \partial D$ is connected if and only if  all boundary vertices are leaves.

\smallskip

(g) $\Rightarrow$ (a).
Assume that $D = D_1 \cup \partial' D_1$ for some domain $D_1$.
Then $\partial D = \partial' D_1$, and by Lemma~\ref{lem:outer-boundary-tree},
each vertex $x\in\partial D$ has exactly one neighbor in $D$.
Thus every boundary vertex is a leaf, i.e.\ $\partial D = L(D)$.

\smallskip

Finally we prove (a) $\Rightarrow$ (g). Assume $\partial D = L(D)$ and set
\[
 D^{\mathrm{o} }  = D\setminus\partial D = D\setminus L(D).
\]
Since $|D|\ge 3$ the set $D^{\mathrm{o} } $ is non-empty and by (f) it is connected. We now prove  that $\partial'D^{\mathrm{o} }  = \partial D$.

\smallskip

We first show that $\partial' D^{\mathrm{o} }  \subset \partial D$.  Assume by contradiction that there exists a vertex $z \in \partial' D^{\mathrm{o} }  \setminus \partial D$.
Then $z \notin D$, and since $z \in \partial' D^{\mathrm{o} } $, it is adjacent to some vertex $y \in D^{\mathrm{o} } $.
As $D^{\mathrm{o} }  \subset D$, this implies that $y \in D$ has a neighbor $z \notin D$, hence $y \in \partial D$. However, by definition $D^{\mathrm{o} }  = D \setminus \partial D$,
so $y \notin \partial D$, a contradiction. Therefore, no such vertex $z$ exists, and we conclude that $\partial' D^{\mathrm{o} }  \subset \partial D$.

\smallskip

To prove the converse inclusion $\partial D \subset \partial' D^{\mathrm{o} } $, choose a vertex $x\in\partial D$.
Since $x$ is a leaf of $D$, it has exactly one neighbor $y\in D$; two leaves cannot be adjacent, so $y\notin L(D)$ and therefore $y\in D^{\mathrm{o} } $. Thus $x\in\partial'D^{\mathrm{o} } $.
This shows that $\partial' D^{\mathrm{o}} = \partial D$. 
Since $D^{\mathrm{o}} = D\setminus \partial D$, we obtain
\[
D = D^{\mathrm{o}} \cup \partial D
  = D^{\mathrm{o}} \cup \partial' D^{\mathrm{o}},
\]
so that (g) holds with $D_1 = D^{\mathrm{o}}$. 
This completes the proof.
\end{proof}

\medskip

As an immediate consequence of (e), we have the following 
\begin{corollary}  
Let $D$ be a domain in $T_d$ such that $|D| \equiv 2 \pmod{d-1}$. Then $D$ is optimal if and only if $D$ is full.
\end{corollary}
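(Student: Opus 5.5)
The plan is to read this off the equivalence (a) $\Leftrightarrow$ (e) of the preceding theorem, with a separate check of the small cases that theorem excludes. Throughout I will assume $|D|\ge 2$, since the notion of a full domain is only defined for such domains. The case $|D|=1$ can only satisfy the congruence when $d=2$, and then it is degenerate.

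For the direction ``full $\Rightarrow$ optimal'' no congruence is needed: Corollary~\ref{CorcritereIsop}(b) already states that every full domain is optimal. Concretely, $R(D)=\emptyset$ gives $\tau(D)=0\le d-2$, and Theorem~\ref{CritereIsop} then applies.

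For the converse, let $D$ be optimal with $|D|\equiv 2\pmod{d-1}$, and split on the size of $D$.
\begin{enumerate}[(i)]
\item If $|D|\ge 3$, then $D$ satisfies condition (e) of the preceding theorem. The implication (e) $\Rightarrow$ (c) $\Rightarrow$ (a) shows that $D$ is full. In more detail, (e) $\Rightarrow$ (c) works as follows:
\begin{itemize}
\item by \eqref{eq.tau-formula}, $\tau(D)\equiv |D|-2\equiv 0 \pmod{d-1}$;
\item optimality together with Theorem~\ref{CritereIsop} gives $0\le\tau(D)\le d-2$;
\item these two facts force $\tau(D)=0$, hence $R(D)=\emptyset$.
\end{itemize}
\item If $|D|=2$, both vertices have degree $1$ in $D$, so $\partial D=L(D)$ and $D$ is automatically full. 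This is the remark following the theorem.
\end{enumerate}

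There is no real obstacle here. The only point needing care is that the preceding theorem is stated for $|D|\ge 3$, which is why the two-vertex case is handled separately. The case $d=2$ needs nothing extra, since every domain in $T_2$ is full.
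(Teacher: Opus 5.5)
Your proof is correct and follows the paper's route: the paper obtains the corollary directly from the equivalence (a) $\Leftrightarrow$ (e) of the preceding theorem, whose key step (e) $\Rightarrow$ (c) is exactly your argument combining $\tau(D)\equiv |D|-2 \pmod{d-1}$ with $0\le\tau(D)\le d-2$. Your separate treatment of the two-vertex case, which the paper leaves implicit in the remark after the theorem, is worthwhile extra care; you could also note that the (e) $\Rightarrow$ (c) argument never actually uses $|D|\ge 3$.
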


\section{Gluing domains along a vertex}
\label{sec.gluing}

We now introduce a basic operation for assembling larger domains from smaller ones.
\begin{definition}
Let $D_1,\dots,D_m$ be domains in $T_d$, each of cardinality at least~$2$,  with $2 \leq m\leq d$, and let
$p\in T_d$ be a vertex such that $p\in \partial D_i$ for all $i$.  
We say that the domain $D$ is obtained by \emph{gluing} $D_1,\dots,D_m$ at the vertex ~$p$ if
\begin{enumerate}[(i)]
\item $D=\bigcup_{i=1}^m D_i$,
\item $\partial D_i\cap \partial D_j=\{p\}$ for all $i\ne j$.
\end{enumerate}
\end{definition}

Geometrically, we attach $m$ subtrees along the single common vertex~$p$.  From the definition it follows immediately that
\begin{equation}\label{eq.degree-sum}
   \deg_D(p)=\sum_{i=1}^m \deg_{D_i}(p),
\end{equation}
thus $p\in\partial D$ if and only if
$$
   \sum_{i=1}^m \deg_{D_i}(p)<d.
$$
\begin{example}
If $m=2$ and $D_2$ is a two-vertex domain, then gluing $D_2$ to another domain $D_1$ amounts to attaching a leaf to $D_1$ at a vertex $p\in \partial D_1$.
\end{example}

\medskip

\begin{proposition}\label{prop:tau-gluing}
Let $D$ be obtained by gluing the domains $D_1,\dots,D_m$ at~$p$. Then
\[
   \tau(D) = \begin{cases}
       \displaystyle \sum_{i=1}^m \tau(D_i) + (m-1),          &\text{if } p\in\partial D, \\[0.3cm]
       \displaystyle \sum_{i=1}^m \tau(D_i) - (d-m),           &\text{if } p\notin\partial D .
     \end{cases}
\]
In particular, gluing increases the total branching excess  $\tau$ by exactly  $m-1$ if  $p\in \partial D$, while $\tau$ remains unchanged or decreases if  $p \not\in \partial D$.
\end{proposition}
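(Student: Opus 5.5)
Since $\tau(D)$ is a sum over boundary vertices, the plan is to sort the vertices of $D$ by whether they are $p$ or not, and compare each contribution to $\tau(D)$ with the contributions to the $\tau(D_i)$. The key structural fact is that the pieces $D_i\setminus\{p\}$ are pairwise disjoint. Indeed, if $x\ne p$ lay in $D_i\cap D_j$, then the path from $x$ to $p$ would lie in both subtrees. Taking the last vertex $y\ne p$ on that path, $p$ would have the common neighbor $y$ in $D_i$ and $D_j$. This would contradict \eqref{eq.degree-sum}, which we treat as part of the gluing convention: the $D_i$ meet only at $p$ and occupy disjoint sets of neighbors of $p$.

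\emph{Vertices $x\ne p$.} Such an $x$ lies in exactly one $D_i$. All neighbors of $x$ in $D$ lie in $D_i$: a neighbor in $D_j\setminus\{p\}$ with $j\ne i$ would, by the tree structure, force $x$ onto the path to $p$ inside $D_j$, hence $x\in D_j$, which is impossible. Thus $\deg_D(x)=\deg_{D_i}(x)$, and so $x\in\partial D$ iff $x\in\partial D_i$. The contribution of $x$ is therefore the same in $\tau(D)$ and in $\tau(D_i)$.

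\emph{The vertex $p$.} Since $p\in\partial D_i$ for every $i$, its total contribution to $\sum_i\tau(D_i)$ is
\[
\sum_{i=1}^m\bigl(\deg_{D_i}(p)-1\bigr)=\deg_D(p)-m,
\]
using \eqref{eq.degree-sum}. If $p\in\partial D$, its contribution to $\tau(D)$ is $\deg_D(p)-1$, so the difference is $\tau(D)-\sum_i\tau(D_i)=m-1$. If $p\notin\partial D$, then $\deg_D(p)=d$ and $p$ contributes nothing to $\tau(D)$, so the difference is $-(d-m)$. Combining this with the first step gives both formulas. The final remark follows because $m\le d$ gives $d-m\ge0$.

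The main obstacle is the first step, namely the precise bookkeeping that the $D_i\setminus\{p\}$ are disjoint and exchange no edges. Condition (ii) in the definition only constrains the boundaries, so I will justify disjointness via the tree/path argument above, together with the degree additivity \eqref{eq.degree-sum}, which the paper states as immediate. Everything else is a direct computation.
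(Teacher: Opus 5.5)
Your proof is correct and follows the paper's argument: each vertex $x\neq p$ contributes identically to $\tau(D)$ and to exactly one $\tau(D_i)$, while by \eqref{eq.degree-sum} the vertex $p$ contributes $\deg_D(p)-m$ to $\sum_i\tau(D_i)$ versus $\deg_D(p)-1$ (or $0$, when $\deg_D(p)=d$) to $\tau(D)$. Your extra justification that the sets $D_i\setminus\{p\}$ are disjoint and have no edges between them makes explicit what the paper takes for granted, and your caution is warranted, since condition (ii) alone does not imply \eqref{eq.degree-sum} (take $y$ a neighbor of $p$, $D_1=\{p,y\}$ and $D_2=B(y,1)$); in the no-cross-edge step you should also treat the symmetric case where the neighbor $z$ lies on the path from $x$ to $p$, which forces $z\in D_i$.
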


\begin{proof}
For each $i$ and any vertex  $x\ne p$ lying in $D_i$ we have $\deg_D(x)=\deg_{D_i}(x)$.  As for the point $p$,  from \eqref{eq.degree-sum} we have
\begin{equation}\label{eq.deg-expand}
   \deg_D(p)-1  = \left( \sum_{i=1}^m \deg_{D_i}(p)\right) - 1    = \sum_{i=1}^m (\deg_{D_i}(p)-1) + (m-1).
\end{equation}
We now separately consider the cases where $p$ belongs to the boundary of $D$ or not.

\smallskip 

If  $p\in\partial D$, then  using~\eqref{eq.deg-expand}, we obtain
\begin{align*}
\tau(D)  &= \sum_{x\in \partial D\setminus \{p\}} \bigl(\deg_D(x)-1\bigr)    + \bigl(\deg_D(p)-1\bigr) 
 \\[0.3em] &=  \sum_{i=1}^m    \left(\sum_{x\in \partial D_i\setminus \{p\}}    \bigl(\deg_{D_i}(x)-1\bigr)\right)    + \bigl(\deg_D(p)-1\bigr) \\[0.3em]
&= \sum_{i=1}^m    \left(\sum_{x\in \partial D_i}    \bigl(\deg_{D_i}(x)-1\bigr)\right)    + (m-1)  = \sum_{i=1}^m \tau(D_i) + (m-1).
\end{align*}

\smallskip 

If $p\notin\partial D$, the argument is similar, except that the contribution $\deg_D(p)-1$ must \textit{not} be counted in the computation of $\tau(D)$. Since in this case $\deg_D(p)=d$, we obtain
\[
 \tau(D) = \sum_{i=1}^m \tau(D_i) + (m-1) - (d-1)  = \sum_{i=1}^m \tau(D_i) - (d-m).
\]
This concludes the proof.
\end{proof}

\medskip

Combining Theorem~\ref{CritereIsop} with Proposition~\ref{prop:tau-gluing} we obtain the following optimality criterion.
\begin{corollary}\label{cor:gluing-optimal}
Let $D$ be obtained by gluing the domains $D_1,\dots,D_m$ at a vertex~$p$.
Then $D$ is isoperimetrically optimal if and only if one of the following conditions holds:
\begin{enumerate}[(i)]
\item If $p\in\partial D$, then
      \[
         \sum_{i=1}^m \tau(D_i)+(m-1)  \leq  d-2.
      \]
\item If $p\notin\partial D$, then
      \[
         \sum_{i=1}^m \tau(D_i)-(d-m)  \leq  d-2.
      \]
\end{enumerate}
\end{corollary}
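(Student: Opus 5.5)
This corollary is a direct combination of two earlier results, and I would prove it by substitution. I apply Theorem~\ref{CritereIsop} to the glued domain $D$. For this I first check that $|D|\ge 2$. This is immediate, since $D\supset D_1$ and $|D_1|\ge 2$ by the definition of gluing. So $D$ is optimal if and only if $\tau(D)\le d-2$.

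Next I split into the two cases $p\in\partial D$ and $p\notin\partial D$. These are exhaustive and mutually exclusive, and by \eqref{eq.degree-sum} they correspond to whether $\sum_i \deg_{D_i}(p)<d$ or not. In each case I replace $\tau(D)$ by the corresponding expression from Proposition~\ref{prop:tau-gluing}:
\begin{itemize}
\item if $p\in\partial D$, then $\tau(D)=\sum_i\tau(D_i)+(m-1)$;
\item if $p\notin\partial D$, then $\tau(D)=\sum_i\tau(D_i)-(d-m)$.
\end{itemize}
The inequality $\tau(D)\le d-2$ then becomes exactly condition (i) or condition (ii) respectively. This gives both directions of the equivalence at once, because in each case the proof is a chain of equivalences.

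There is no real obstacle here. The only care needed is in reading the statement: each of (i) and (ii) should be read as the criterion that applies in its own case, namely ``$p\in\partial D$ and the inequality in (i) holds'', or ``$p\notin\partial D$ and the inequality in (ii) holds''.

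As a side remark in case (ii), one may note that the inequality is automatic whenever $\sum_i\tau(D_i)\le 2d-2-m$. This is not needed for the proof.
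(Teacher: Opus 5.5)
Your proposal is correct and matches the paper, which obtains this corollary by substituting the two cases of Proposition~\ref{prop:tau-gluing} into the criterion $\tau(D)\le d-2$ of Theorem~\ref{CritereIsop}. Your closing side remark is just condition (ii) rearranged, not an independent sufficient condition, but it plays no role in the argument.
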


In particular, if all subdomains $D_i$ are full, then $\tau(D)=m-1$ when $p\in\partial D$, and $\tau(D)=0$ otherwise. 
Indeed, if $p\notin\partial D$, then $\deg_D(p)=d$.  Furthermore, each domain $D_i$ is glued at $p$ along a distinct edge, i.e. each $D_i$ contributes exactly one neighbor of $p$. Therefore 
\[
 m = \deg_D(p)=d.
\]
Since $\tau(D_i)=0$ for all $i$, Proposition~\ref{prop:tau-gluing} gives
\[
   \tau(D)=\sum_{i=1}^m \tau(D_i)-(d-m)=0.
\]
In both cases, $D$ is optimal (and full in the latter case). 
This also follows from Corollary~\ref{CorcritereIsop}, since $R(D)$ is either empty or equal to $\{p\}$, and hence has cardinality at most~$1$.

\medskip

\section{The combinatorial structure of  domains in $T_d$}
\label{sec.combstructure}

In this section, we describe a canonical decomposition of arbitrary domains in the regular tree $T_d$ (of cardinality at least $2$) 
into full components glued along the residual boundary.  We introduce the notion of \textit{stem}, a finite tree encoding this decomposition. This framework leads to an alternative criterion for isoperimetric optimality.

\subsection{Canonical decomposition}
\label{subsec.decomposition}

We first observe that full domains  are  indecomposable with respect to gluing  along boundary vertices:
\begin{lemma}\label{lem:full-no-boundary-gluing}
Let $D\subset T_d$ be a full domain.  Then $D$ cannot be written as a nontrivial gluing of domains along a boundary vertex.
\end{lemma}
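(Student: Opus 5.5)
We argue by contradiction. Suppose the full domain $D$ is obtained by gluing domains $D_1,\dots,D_m$ ($2\le m\le d$, each $|D_i|\ge 2$) at a vertex $p$, where $p\in\partial D_i$ for all $i$ and $\partial D_i\cap\partial D_j=\{p\}$ for $i\ne j$. We read "along a boundary vertex" as saying that $p\in\partial D$. We will show that $p$ cannot be a leaf of $D$, which contradicts fullness.

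First we compute the degree of $p$. By \eqref{eq.degree-sum},
\[
\deg_D(p)=\sum_{i=1}^m \deg_{D_i}(p).
\]
Each $D_i$ is connected and has at least two vertices, so $\deg_{D_i}(p)\ge 1$ for every $i$. Since $m\ge 2$, this gives $\deg_D(p)\ge m\ge 2$.

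Next we use the assumption $p\in\partial D$. Together with $\deg_D(p)\ge 2$, this means $p\in R(D)$. But $D$ is full, so $R(D)=\emptyset$, a contradiction.

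The same conclusion can be reached through $\tau$. Proposition~\ref{prop:tau-gluing} gives
\[
\tau(D)=\sum_i\tau(D_i)+(m-1)\ge m-1\ge 1,
\]
whereas a full domain has $\tau(D)=0$.

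The only real obstacle is interpretive: we must check that the intended meaning of "along a boundary vertex" is $p\in\partial D$, and not merely $p\in\partial D_i$. If $p\notin\partial D$, nontrivial gluings of full domains do exist. For example, $m=d$ full domains glued at $p$ produce a full domain, as noted after Corollary~\ref{cor:gluing-optimal}. So the lemma must be read with $p\in\partial D$, and under that reading the argument above is complete. We would also remark that the degree computation relies on the edges of the different $D_i$ at $p$ being distinct. This follows from condition (ii), since a shared neighbor of $p$ would lie in two of the $D_i$. In any case, $\deg_D(p)\ge 2$ holds regardless, because the union contains at least two distinct neighbors of $p$ coming from different $D_i$.
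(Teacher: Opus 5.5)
Your main argument is the paper's own. From \eqref{eq.degree-sum} and $\deg_{D_i}(p)\ge 1$ you get $\deg_D(p)\ge m\ge 2$, so a gluing vertex $p\in\partial D$ lies in $R(D)$ and $D$ is not full. Arguing by contradiction instead of contrapositive, and adding the $\tau$ variant via Proposition~\ref{prop:tau-gluing}, are cosmetic differences. Your reading $p\in\partial D$ is exactly the one the paper's proof uses.

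Your closing remark, however, is wrong. Condition (ii) only constrains boundaries, so a neighbor of $p$ can lie in two of the $D_i$ as long as it is interior to at least one of them. Here is an example in $T_3$:
\begin{itemize}
\item let $q$ have neighbors $p,q_1,q_2$, and let $q_{11},q_{12}$ be the other neighbors of $q_1$;
\item set $D_1=\{p,q,q_1,q_{11},q_{12}\}$ and $D_2=\{p,q,q_1,q_2\}$;
\item then $\partial D_1=\{p,q,q_{11},q_{12}\}$ and $\partial D_2=\{p,q_1,q_2\}$, so (ii) holds, and both $D_i$ are proper subsets of $D=D_1\cup D_2$.
\end{itemize}
Yet $D$ is full, and $\deg_D(p)=1\neq\deg_{D_1}(p)+\deg_{D_2}(p)=2$. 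So neither ``(ii) forces distinct edges at $p$'' nor ``$\deg_D(p)\ge 2$ regardless'' is true. In fact, with the definition read literally, the lemma itself fails.

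What actually makes \eqref{eq.degree-sum} valid is the intended requirement that $D_i\cap D_j=\{p\}$ for $i\neq j$. This is implicit in the paper's phrase ``attach $m$ subtrees along the single common vertex $p$''. Under it, the $D_i$ contain pairwise distinct neighbors of $p$. The paper's proof, yours, and the $\tau$ route all depend on this, since Proposition~\ref{prop:tau-gluing} rests on the same degree identity. Replace your last two sentences with this explicit assumption and the proof is complete.
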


\begin{proof}
We prove the contrapositive.  Assume that $D$ is obtained by gluing domains $D_1,\dots,D_m$ at some boundary vertex $p\in\partial D$, with $m\ge 2$.
Then
\[
\deg_D(p)=\sum_{i=1}^m \deg_{D_i}(p)\ge m\ge 2,
\]
so $p$ is not a leaf of $D$. Hence $\partial D\neq L(D)$, and therefore $D$ is not full.
\end{proof}

\medskip

Let $D\subset T_d$ be a non-full domain and denote by $B_1,\dots,B_m$ the connected components of $D\setminus R(D)$.  
Note that $m \geq 2$ else $D$ would be full, since in this case we would have $R(D) = \emptyset$.  For each $i$ we define
\begin{equation}\label{eq:def-Fi}
F_i  =  B_i \cup \bigl(R(D)\cap \partial' B_i\bigr)\subset D; 
\end{equation}
that is, $F_i$ consists of the component $B_i$ together with all vertices of $R(D)$ that are adjacent to $B_i$.
The number $m$ will be referred to as the \textit{number of full components of}  the domain $D$.  We will see in Remark \ref{rem.boundontaum} below that $m \leq  \tau(D) +1$. In particular, if $D$ is optimal, then  $m \leq d-1$.

\medskip  \newpage 

Our next theorem states that the $F_i$'s are full domains and $D$ is obtained by gluing these full domains along some vertices in $R(D)$:
\begin{theorem}\label{th.dec}
With the above notations, each $F_i$ is a full domain.  
Moreover, if we set
\[
   R_0  =  \{x\in R(D) \mid \deg_{R(D)}(x)=\deg_D(x)\},
\]
then we have
\begin{enumerate}[(i)]
 \item $R_0 = D \setminus \bigcup_{i=1}^m F_i$.
 \item $D =  R(D) \cup \left( \bigcup_{i=1}^m B_i \right) 
  =  R_0 \cup \left( \bigcup_{i=1}^m F_i \right)$.
\end{enumerate}
\end{theorem}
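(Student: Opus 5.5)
The plan is to work directly from the definitions of $R(D)$, $B_i$ and $F_i$, using repeatedly that $D$ is a subtree of $T_d$. The full domain claim and part (i) are proved first, and part (ii) then follows by set bookkeeping.

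\emph{Step 1: each $F_i$ is a domain with $|F_i|\ge 2$.} The component $B_i$ is connected, and every added vertex of $R(D)\cap\partial' B_i$ is adjacent to $B_i$, so $F_i$ is connected and finite. I also need $R(D)\cap \partial' B_i\neq\emptyset$. Since $m\ge 2$ and $D$ is connected, some edge of $D$ leaves $B_i$. Its other endpoint lies in $D\setminus B_i$, and it cannot lie in another component $B_j$, because $B_i$ is a full connected component of $D\setminus R(D)$. Hence that endpoint is in $R(D)$, which gives $|F_i|\ge 2$.

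\emph{Step 2: $\partial F_i = L(F_i)$.} Take $x\in\partial F_i$; there are two cases.
\begin{itemize}
\item[(a)] If $x\in R(D)\cap\partial' B_i$, then by Lemma~\ref{lem:outer-boundary-tree}, applied to the domain $B_i$ in $T_d$, the vertex $x$ has exactly one neighbor in $B_i$. Any other neighbor of $x$ inside $F_i$ would be a vertex $y\in R(D)\cap\partial'B_i$. Then $x$ and $y$ would both lie in $\partial' B_i$ and be adjacent, contradicting the independence of $\partial'B_i$. So $\deg_{F_i}(x)=1$, and $x$ is a leaf.
\item[(b)] If $x\in B_i$, then $x\notin R(D)$, so either $x\notin\partial D$ or $x\in L(D)$. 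In the first subcase all $d$ neighbors of $x$ lie in $D$. Each such neighbor lies in $B_i$ (adjacent vertices of $D\setminus R(D)$ lie in the same component) or in $R(D)\cap\partial'B_i$. Thus all neighbors lie in $F_i$ and $x\notin\partial F_i$, a contradiction. In the second subcase $x$ has exactly one neighbor in $D$, and that neighbor lies in $F_i$ by the same argument. So $\deg_{F_i}(x)=1$.
\end{itemize}
In both cases every boundary vertex of $F_i$ is a leaf, so $F_i$ is full.

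\emph{Step 3: part (i).} Here $D\setminus\bigcup F_i$ consists of the vertices of $R(D)$ not adjacent to any $B_i$. Every neighbor in $D$ of a vertex of $R(D)$ lies either in $R(D)$ or in some $B_i$. So $x\in R(D)$ avoids all $B_i$ exactly when all of its $D$-neighbors are in $R(D)$, that is, when $\deg_{R(D)}(x)=\deg_D(x)$. This is precisely the condition defining $R_0$.

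\emph{Step 4: part (ii).} The first equality $D=R(D)\cup\bigcup B_i$ is immediate from the definition of the $B_i$ as the components of $D\setminus R(D)$. The second equality follows from (i), since $\bigcup F_i\subset D$ and $R_0$ is exactly its complement.

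I expect the main obstacle to be Step 2, specifically excluding a second neighbor of a residual vertex inside $F_i$. The independence of $\partial' B_i$ from Lemma~\ref{lem:outer-boundary-tree} handles this cleanly.
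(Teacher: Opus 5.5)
Your proof is correct and follows essentially the same route as the paper. You get connectedness of $F_i$ from $B_i$, then do a two-case check of boundary vertices, using Lemma~\ref{lem:outer-boundary-tree} (exactly one neighbor in $B_i$, plus independence of $\partial' B_i$) and the fact that every $D$-neighbor of a vertex in $B_i$ lies in $F_i$ (which the paper packages as Lemma~\ref{lem:neighbors-Ci}). Your explicit check that $|F_i|\ge 2$ and your bookkeeping for (i)--(ii) are welcome additions, since the paper dismisses these as following directly from the definitions.
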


\medskip

Our proof will use the following Lemma about the neighbor sets of a vertex in $B_i$: 
\begin{lemma}\label{lem:neighbors-Ci}
For any $x\in B_i$ we have $N_{D}(x)=N_{F_i}(x)$. In particular $\deg_{F_i}(x)=\deg_D(x)$.
\end{lemma}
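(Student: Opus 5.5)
The plan is a direct comparison of neighborhoods: first show $N_D(x)\subset F_i$ for every $x\in B_i$, and then use that $F_i$ is an induced subgraph of $D$. Since $F_i\subset D$ and subsets are identified with induced subgraphs, we have $N_{F_i}(x)=N_D(x)\cap F_i$ for any $x\in F_i$. So the whole statement reduces to the inclusion $N_D(x)\subset F_i$ for $x\in B_i$. The equality of degrees then follows at once, since $\deg_{F_i}(x)=|N_{F_i}(x)|=|N_D(x)|=\deg_D(x)$.

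To prove the inclusion, fix $x\in B_i$ and a neighbor $y\in N_D(x)$. There are two cases.

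\emph{Case $y\notin R(D)$.} Then $y\in D\setminus R(D)$, and $y$ is adjacent to $x$, which lies in the connected component $B_i$ of $D\setminus R(D)$. Since $B_i$ is a connected component, it is closed under adjacency within $D\setminus R(D)$, so $y\in B_i\subset F_i$.

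\emph{Case $y\in R(D)$.} Then $y\notin B_i$, because $B_i\subset D\setminus R(D)$. Since $y$ is adjacent to $x\in B_i$, we get $y\in\partial' B_i$ by definition of the outer boundary. Hence $y\in R(D)\cap\partial' B_i\subset F_i$ by \eqref{eq:def-Fi}.

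In both cases $y\in F_i$, which proves $N_D(x)\subset F_i$ and therefore $N_D(x)=N_{F_i}(x)$.

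I expect no real obstacle. The only point requiring care is to note explicitly that $F_i$ carries the induced-subgraph structure, so that adjacency in $F_i$ coincides with adjacency in $D$ and in $T_d$. This is the paper's standing convention, and it is what makes $N_{F_i}(x)=N_D(x)\cap F_i$ valid.
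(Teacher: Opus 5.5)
Your proposal is correct and follows the paper's proof essentially verbatim: show that every $y\in N_D(x)$ lies in $F_i$ by splitting into the cases $y\in R(D)$ (so $y\in R(D)\cap\partial' B_i$) and $y\notin R(D)$ (so $y\in B_i$ by maximality of the component). Your explicit remark that $N_{F_i}(x)=N_D(x)\cap F_i$ because $F_i$ is an induced subgraph is exactly what the paper leaves implicit when it calls the inclusion $N_{F_i}(x)\subset N_D(x)$ ``clear''.
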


\begin{proof}
The inclusion $N_{F_i}(x)\subset N_D(x)$ is clear.
To prove the reverse inclusion, we show that every vertex $z\in N_D(x)$
belongs to $F_i$. We distinguish two cases.

\smallskip

Suppose first that $z\in R(D)$. Then $z\notin B_i$, but $z$ is adjacent to
$x\in B_i$. Hence $z\in \partial' B_i$, and therefore
\[
z\in R(D)\cap \partial' B_i \subset F_i,
\]
by definition of $F_i$.

\smallskip

Suppose now that $z\notin R(D)$. Then $z\in D\setminus R(D)$, and since $B_i$ is the connected component of $D\setminus R(D)$ containing $x$, it follows that $z\in B_i\subset F_i$. In particular, $z\in F_i$.

\smallskip

This proves that $N_D(x)\subset N_{F_i}(x)$, and hence $N_D(x)=N_{F_i}(x)$. 
\end{proof}

\medskip

\begin{proof}[Proof of Theorem \ref{th.dec}.]
Properties \textup{(i)}--\textup{(ii)} follow directly from the definitions, so it remains to prove that each $F_i$ is a full domain.
Since $B_i$ is connected and every vertex of $R(D)\cap \partial' B_i$ is adjacent to $B_i$, the set $F_i$ is connected. We therefore only need to show that every boundary vertex $x\in \partial F_i$ is a leaf of $F_i$. Two cases arise. 

\smallskip

If $x\in R(D)\cap \partial' B_i$, then by Lemma \ref{lem:outer-boundary-tree},   $x$ has exactly one neighbor in $B_i$ and no neighbor in $\partial' B_i$. Hence $x$ has exactly one neighbor in  $F_i = B_i \cup \bigl(R(D)\cap \partial' B_i\bigr)$, that is $x$ is a leaf of $F_i$.

\smallskip

If $x\in B_i$, then by Lemma~\ref{lem:neighbors-Ci}  we have  $\deg_{F_i}(x)=\deg_D(x)$. Since $x\in \partial F_i$, we have
$\deg_{F_i}(x)\le d-1$, hence also $\deg_D(x)\le d-1$ and therefore $x\in \partial D$, but since  $x\notin R(D)$, we conclude that $x\in L(D)$. Hence  $\deg_D(x)=1$ and thus also $\deg_{F_i}(x)=1$, i.e. $x$ is a leaf of $F_i$. 
\end{proof}

\medskip 

We will show in Section~\ref{sec.stemdiagrams} below how the boundary branching excess $\tau(D)$ can be expressed in terms of the combinatorial data associated with the above decomposition. This leads to a convenient criterion for isoperimetric optimality; see Lemma~\ref{lem.graphcount} and Theorem~\ref{th.reconstruction}.

\medskip

\subsection{The stem of a domain} \label{sec.stem}

In this subsection, we associate to any domain $D\subset T_d$ a quotient graph, called the \emph{stem}, which encodes the combinatorial pattern according to which the full components of $D$   are attached along the residual boundary $R(D)$. 
Let $B_1,\dots,B_m$ be the connected components of $D\setminus R(D)$, and recall that 
\[
  F_i = B_i \cup \bigl(R(D)\cap \partial' B_i\bigr)
\]
is a full domain for any $i = 1, \dots, m$ and $D$ is obtained by gluing these domains  along vertices in $R(D)$.

\medskip

\begin{definition}\label{def.stem}
The \emph{stem} of the domain $D$ is the graph $D^*$ obtained by contracting each connected component $B_i$ of $D\setminus R(D)$ to a single vertex, that we denote by  $b_i$, while leaving all vertices of $R(D)$ distinct. More precisely, the vertex set of $D^*$ is defined as the partitioned set
\begin{equation}\label{stempartition}
    V(D^*) = R(D)\,\sqcup\,\{b_1,\dots,b_m\},
\end{equation}
and the edges in $D^*$  are defined as follows:
\begin{enumerate}[\quad $\circ$\ ]
\item if $x,y\in R(D)$ are adjacent in $D$, then they are adjacent in $D^*$;
\item if $x\in R(D)$ and $y\in B_i$ are adjacent in $D$, then $x$ is adjacent to
      $b_i$ in $D^*$;
\item there are no edges between distinct vertices $b_i$ and $b_j$ if $i\neq j$.
\end{enumerate}
\end{definition}

\smallskip 

Note that $D^*$ is not a subgraph of $T_d$ but rather an abstract graph, in fact a quotient of $D$. Furthermore,  the partition \eqref{stempartition} is regarded as part of the structure of the stem. By construction, the stem records the adjacency relations between residual boundary vertices and the full components of $D$, while forgetting the internal structure of each full component. We now collect some basic properties of this graph.

\begin{proposition}\label{prop.stemproperties}
The stem $D^*$ of a domain $D\subset T_d$ satisfies the following properties:
\begin{enumerate}[(i)]
\item $D^*$ is a finite tree.
\item For every vertex $x\in R(D)$, we have 
\[
   2 \le \deg_{D^*}(x) = \deg_D(x) \le d-1.
\]
\item For each contracted vertex $b_i$,  we have 
\[
   \deg_{D^*}(b_i)  = |R(D)\cap \partial' B_i|
   = \bigl|\{x\in R(D)\mid x \text{ has a neighbor in } B_i\}\bigr|.
\]
\end{enumerate}
\end{proposition}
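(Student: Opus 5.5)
The plan is to prove (ii) and (iii) first by a local inspection of adjacencies, and then deduce (i) from an Euler characteristic count together with connectedness. The key structural input is Lemma~\ref{lem:outer-boundary-tree}. Apply it to each $B_i$, which is a domain in $T_d$: every vertex of $\partial' B_i$ has exactly one neighbor in $B_i$. Also, every neighbor in $D$ of a vertex of $R(D)$ lies either in $R(D)$ or in exactly one of the components $B_j$.

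For (ii), fix $x\in R(D)$. Its neighbors in $D$ split into those in $R(D)$ and those in the various $B_j$. Each neighbor $y\in R(D)$ gives one edge $\{x,y\}$ of $D^*$. Each $B_j$ containing a neighbor of $x$ gives an edge $\{x,b_j\}$, and it contains exactly one such neighbor, since $x\in\partial' B_j$ and Lemma~\ref{lem:outer-boundary-tree} applies. So the edges of $D^*$ at $x$ are in bijection with the edges of $D$ at $x$, giving $\deg_{D^*}(x)=\deg_D(x)$. The bounds $2\le\deg_D(x)\le d-1$ are just the definition of $R(D)$.

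For (iii), note that by Definition~\ref{def.stem} the neighbors of $b_i$ in $D^*$ are exactly the $x\in R(D)$ having a neighbor in $B_i$, that is, the set $R(D)\cap\partial' B_i$. Since the stem is a simple graph, this set counts each such $x$ once, which gives (iii).

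For (i), the main step is the count. First, connectedness: the quotient map $D\to D^*$ sends edges of $D$ either to edges of $D^*$ or to single vertices (edges inside some $B_i$). It is surjective on vertices, so $D^*$ is connected because $D$ is. Finiteness is clear. Next, count edges. By the bijection in (ii), the edges of $D^*$ correspond exactly to the edges of $D$ with at least one endpoint in $R(D)$. The remaining edges of $D$ are the edges inside the $B_i$, since no edge joins distinct components $B_i,B_j$. Each $B_i$ is a subtree, so it has $|B_i|-1$ edges. Hence
\[
|E(D^*)| = |D|-1-\sum_{i=1}^m(|B_i|-1) = |R(D)|+m-1 = |V(D^*)|-1 .
\]
Thus $\chi(D^*)=1$, and a connected finite graph with $\chi=1$ is a tree.

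The main care point is the edge bijection. We must check that edges between $R(D)$ and $B_i$ are not collapsed, that is, two distinct edges from $x$ into the same $B_i$ would map to the same stem edge; this is excluded exactly by Lemma~\ref{lem:outer-boundary-tree}. An alternative route is to note that a cycle in $D^*$ would lift to a cycle in $D$, by joining the entry and exit points inside each connected $B_i$. The counting argument is cleaner, however.
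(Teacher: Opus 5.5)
Your proof is correct. For (ii) and (iii) it is the paper's argument: the local correspondence between edges of $D$ and of $D^*$ at $x\in R(D)$, with Lemma~\ref{lem:outer-boundary-tree} applied to $B_j$ ruling out two edges from $x$ into the same component.

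Where you differ is (i). The paper simply observes that contracting the connected subgraphs $B_i$ of the tree $D$ preserves connectedness and creates no cycles. This is the standard fact that contracting a subtree, i.e.\ performing a sequence of edge contractions, keeps a tree a tree. You instead upgrade the local correspondence of (ii) to a global bijection between $E(D^*)$ and the edges of $D$ meeting $R(D)$. You then discard the $|B_i|-1$ internal edges of each component and obtain $|E(D^*)|=|R(D)|+m-1=|V(D^*)|-1$. So $\chi(D^*)=1$, and together with connectedness this gives a tree.

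Your version makes explicit one point the paper's contraction argument leaves implicit: no two $D$-edges collapse onto the same stem edge. That is exactly where Lemma~\ref{lem:outer-boundary-tree} is needed, and without it the degree identity in (ii) would fail. It also derives (i) from (ii) rather than independently. The paper's route is shorter and more conceptual. Your ``alternative route'' of lifting a cycle of $D^*$ through the connected $B_i$ is essentially the paper's argument spelled out.
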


Observe that, by (ii), no leaf of $D^*$ belongs to $R(D)$. 

\medskip 

\begin{proof}
\emph{(i)} Since $D$ is a tree and each $B_i$ is connected, contracting the
subgraphs $B_i$ preserves connectedness and does not create cycles. Hence $D^*$
is a finite tree.

\smallskip

\emph{(ii)} Let $x\in R(D)$. Each edge incident to $x$ in $D$ either joins $x$ to
another vertex of $R(D)$ or to a vertex in some component $B_i$.
In the latter case, Lemma~\ref{lem:outer-boundary-tree} applied to $B_i$ implies
that $x$ has a unique neighbor in $B_i$.

Edges between vertices of $R(D)$ are unchanged by the contraction, while an edge
joining $x$ to $B_i$ becomes an edge joining $x$ to $B_i$ in $D^*$.
Therefore $\deg_{D^*}(x)=\deg_D(x)$, and the bounds follow from the definition of
the residual boundary.

\smallskip

\emph{(iii)} An edge incident to $b_i$ in $D^*$ corresponds precisely to an edge
of $D$ joining a vertex of $B_i$ to a vertex of $R(D)$.
Such vertices of $R(D)$ are exactly those in $R(D)\cap\partial' B_i$, which proves
the claim.
\end{proof}

\medskip 
 
\paragraph{Example}
Let us return  to  the example in Figure~\ref{figaa}. The domain is obtained by gluing three full domains $F_1,F_2,F_3$ at the common vertex $p$. Each $F_i$ is a full domain consisting of a vertex  of degree $5$ in $D$ together with its five neighbors. In each $F_i$ the vertex $p$ is a leaf.

\smallskip 

Likewise,  the example  in Figure~\ref{figab}  is obtained by gluing three full domains $F_1,F_2,F_3$ at the vertex $p$ and attaching three leaves at the vertex $q$, equivalently one glues two-vertex domains $F_4,F_5,F_6$ at $q$. 
In both cases, the stem is shown below.

\begin{figure}[H]
\centering
\begin{minipage}{0.48\textwidth}
\centering

\vspace{0.72cm}
\begin{tikzpicture}[scale=0.8,
  vertexRed/.style={circle,fill=red,inner sep=1.2pt},
  vertexBlue/.style={circle,fill=blue,inner sep=1.2pt},
  every path/.style={thin}
]
\node[vertexRed,label=below:$p$] (p) at (0,0) {};

\node[vertexBlue] (c1) at (2,0) {};
\node[vertexBlue] (c2) at (-1,1.7) {};
\node[vertexBlue] (c3) at (-1,-1.7) {};

\draw (p)--(c1) (p)--(c2) (p)--(c3);
\end{tikzpicture}
 \caption*{The stem of the domain in Figure \ref{figaa}.}
\end{minipage}
\hfill
\begin{minipage}{0.42\textwidth}
\centering
\begin{tikzpicture}[scale=0.8,
  vertexBlue/.style={circle,fill=blue,inner sep=1.2pt},
  vertexRed/.style={circle,fill=red,inner sep=1.2pt},
  every path/.style={thin}
]

\node[vertexRed,label=below:$p$] (p) at (0,0) {};
\node[vertexRed,label=right:$q$] (q) at (-1,1.7) {};

\node[vertexBlue] (c1) at (2,0) {};

\node[vertexBlue] (c3) at (-1,-1.7) {};

\node[vertexBlue] (a23) at (-1.8,0.9) {};

\draw (p)--(c1) (p)--(c2) (p)--(c3);
\draw[thin, red] (p)--(q);

\node[vertexBlue] (a23) at (-1.8,0.9) {};

\draw (p)--(a23);

\node[vertexBlue] (a21) at (-2,2.2) {};
\node[vertexBlue] (a22) at (-0.3,2.7) {};
\node[vertexBlue] (a24) at (0,1.1) {};
\draw (c2)--(a21) (c2)--(a22) (c2)--(a24);
\end{tikzpicture}
 \caption*{The stem of the domain in Figure \ref{figab}.}
\end{minipage}
\end{figure}

\subsection{Abstract stem diagrams} \label{sec.stemdiagrams} 

In order to investigate certain numerical relations arising in the stem of a domain, it is convenient to adopt an abstract viewpoint. 
To this end, we introduce the following notion.

\begin{definition}\label{def.stemdiagram}
A \emph{stem diagram} is an abstract graph $G^*$ whose vertex set is partitioned into two subsets
\[
V(G^*) = R \sqcup B.
\]
Vertices in $R$ are called \emph{red}, while those in $B$ are called \emph{blue}. We impose the following conditions:
\begin{enumerate}[(i)]
 \item $G^*$ is a finite tree;
 \item $B$ is an independent set (i.e., there are no blue--blue edges);
 \item all leaves of $G^*$ belong to $B$ (that is, every leaf is blue, although a blue vertex may have degree greater than~$1$).
\end{enumerate}
A stem diagram is said to be \emph{trivial} if $R = \emptyset$. In that case, $G^*$ consists of a single blue vertex.
\end{definition}
We write $r := |R|$ for the number of red vertices, $m := |B|$ for the number of blue vertices, and $l := |E(R)|$ for the number of red--red edges. We also define
\[
\tau^* = \tau^*(G^*) := \sum_{x \in R} \bigl(\deg_{G^*}(x) - 1\bigr).
\]

For a non-trivial stem diagram, the following basic inequalities hold:
\begin{equation}\label{basicstemequ}
  m\ge 2, \qquad 1\le r \le \tau^*, \qquad \text{and} \qquad r-l\ge 1.
\end{equation}
Indeed, since $G^*$ is a tree, it has at least two leaves; by definition all leaves
are blue, and therefore $m\ge 2$.
Moreover, non-triviality means that $R\neq \emptyset$, hence $r\ge 1$.
Since no red vertex is a leaf, we have $\deg_{G^*}(x)\ge 2$ for every $x\in R$, and thus
\[
  \tau^* =\sum_{x\in R}(\deg_{G^*}(x)-1)\ge |R|=r.
\]
Finally, the induced subgraph on $R$ is a forest, so $r-l=\chi(R)\ge 1$.

\medskip

\begin{lemma}\label{lem.graphcount}
With the above notation, we have
\[
 m+l=\tau^*+1.
\]
\end{lemma}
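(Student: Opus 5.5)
We will derive the identity $m+l=\tau^*+1$ from the Euler characteristic of the tree $G^*$, combined with a count of edges split by type. Since $G^*$ is a finite tree, $\chi(G^*)=1$, that is
\[
 |E(G^*)| = r+m-1 .
\]
By condition (ii) of Definition~\ref{def.stemdiagram} there are no blue--blue edges. Hence every edge of $G^*$ is either red--red or red--blue. We write $e_{RB}$ for the number of red--blue edges, so that $|E(G^*)| = l + e_{RB}$.

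Next we compute the degree sum over red vertices. Each red--red edge contributes $2$ to $\sum_{x\in R}\deg_{G^*}(x)$, and each red--blue edge contributes exactly $1$. Therefore
\[
 \sum_{x\in R}\deg_{G^*}(x) = 2l + e_{RB},
 \qquad\text{so}\qquad
 \tau^* = 2l + e_{RB} - r .
\]
Substituting $e_{RB} = |E(G^*)| - l = r+m-1-l$ gives
\[
 \tau^* = 2l + (r+m-1-l) - r = m+l-1 ,
\]
which is the claimed identity.

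We should also check the trivial case $R=\emptyset$. Then $G^*$ is a single blue vertex, so $m=1$, $l=0$ and $\tau^*=0$ (an empty sum), and the identity still holds. The argument above in fact covers this case without modification, since it never uses $r\ge 1$.

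There is no real obstacle here. The only point needing care is that blue--blue edges are excluded, so that the degree sum over $R$ counts every edge of $G^*$ and counts exactly the red--red ones twice. Condition (iii) on the leaves is not needed for this lemma.
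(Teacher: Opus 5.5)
Your proof is correct and takes essentially the same approach as the paper. Both split the edges into red--red and red--blue using the independence of $B$, compute $\sum_{x\in R}\deg_{G^*}(x)=2l+e_{RB}$, and substitute $|E(G^*)|=r+m-1$. Your remarks that the trivial case is covered and that condition (iii) is not used are also accurate.
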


\begin{proof}
Let $n=|E(G^*)|$ denote the total number of edges of $G^*$. Write $l$ for the number of edges with both endpoints in $R$, and $k$ for the number of edges with one end in $R$ and the other end in $B$.
Since $B$ is an independent set, we have
\[
   n = l + k.
\]
Each red--red edge contributes $2$ to the sum of degrees
$\sum_{x\in R}\deg_{G^*}(x)$, while each red--blue edge contributes $1$.
Hence
\[
   \sum_{x\in R}\deg_{G^*}(x) = 2l + k = (l+k) + l  = n + l.
\]
It follows that
\[
   \tau^*  = \sum_{x\in R}\bigl(\deg_{G^*}(x)-1\bigr)
   = (n+l) - r.
\]
Since $G^*$ is a tree, we have $n=|G^*|-1=(r+m)-1$, and therefore
\[
 \tau^* = (r+m-1)+l-r = m+l-1.
\]
\end{proof}

\medskip

\begin{corollary}\label{cor.stem-bounds}
Let $G^*$ be a non-trivial stem diagram. With the notation introduced above, we have 
\[
   \tau^* +2 - r \le m \le \tau^* +1,
\]
and
\[
   \tau^* = |G^*| - 1 - \chi(R).
\]
In particular $\tau^* \le |G^*| - 2$, with equality if and only if the  subgraph $R \subset G^*$ is connected.
\end{corollary}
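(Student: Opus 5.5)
The whole corollary should follow from Lemma~\ref{lem.graphcount}, $m+l=\tau^*+1$, combined with the elementary inequalities \eqref{basicstemequ} and the fact that the induced subgraph on $R$ is a forest.

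\textbf{The bounds on $m$.} Since $l\ge 0$, the identity gives $m=\tau^*+1-l\le\tau^*+1$. For the lower bound I will use $r-l=\chi(R)\ge 1$, that is, $l\le r-1$. Substituting, $m=\tau^*+1-l\ge \tau^*+1-(r-1)=\tau^*+2-r$.

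\textbf{The formula for $\tau^*$.} I start from $\tau^*=m+l-1$ and substitute $m=|G^*|-r$ and $l=r-\chi(R)$. Here $\chi(R)=r-l$ because the induced subgraph on $R$ has $r$ vertices and $l$ edges. This gives $\tau^*=|G^*|-r+r-\chi(R)-1=|G^*|-1-\chi(R)$.

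\textbf{The final inequality and its equality case.} Since $G^*$ is non-trivial, $R\neq\emptyset$. So $R$ is a nonempty forest, and by the fact recalled in the preliminaries, $\chi(R)$ equals its number of connected components. Hence $\chi(R)\ge 1$, which gives $\tau^*\le |G^*|-2$. Equality holds exactly when $\chi(R)=1$, that is, when the forest $R$ has exactly one component, i.e.\ when $R$ is connected.

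Nothing here is a real obstacle. The only point needing care is to insist on non-triviality, so that $R$ is nonempty and $\chi(R)\ge1$ genuinely counts components. This is what rules out the degenerate case $\chi(R)=0$, where the bound would fail.
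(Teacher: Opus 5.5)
Your proposal is correct and follows essentially the same route as the paper. Both derive everything from Lemma~\ref{lem.graphcount} by substituting $m=\tau^*+1-l$, $l=r-\chi(R)$ and $|G^*|=r+m$, and both use $\chi(R)\ge 1$ for the nonempty red forest; your explicit remark that non-triviality is what guarantees this is a point the paper leaves implicit.
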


\begin{proof}
By Lemma~\ref{lem.graphcount}, we have $m=\tau^*+1-l$, which immediately gives $m\le \tau^* +1$.
Since the red induced subgraph is a forest, its Euler characteristic satisfies $\chi(R)=r-l\ge 1$, and therefore
\[
  m=\tau^*+1-l=\tau^*+1+\chi(R)-r\ge \tau^* +2-r.
\]

Moreover, using again $\chi(R)=r-l$, we compute
\[
  \tau^* = m+l-1 = m+(r-\chi(R))-1 = (r+m)-1-\chi(R)   = |G^*|-1-\chi(R).
\]
Since $\chi(R)\ge 1$, this implies $\tau^*\le |G^*|-2$, with equality  if and only if $\chi(R)=1$, that is, if and only if $R$ is connected.
\end{proof}

\medskip

\begin{remark} \label{rem.boundontaum}
Applying the previous corollary to an arbitrary domain $D \subset T_d$,  we obtain  
$$
 \tau(D)  = \tau^*(D^*)  \leq |D^*| -2,
$$
which improves the upper bound in~\eqref{inequtau1}.  
Furthermore, the number $m$ of full components of $D$ satisfies 
$$
 m \leq  \tau(D) +1.
$$ 
In particular, if $D$ is optimal, then $m \leq d-1$.

\end{remark}

\newpage 

\subsection*{Examples of stem diagrams}

We display below all isomorphism classes of stem diagrams with $\tau^*\le 3$ (note that each such stem is a finite tree with maximal degree at most $\tau^*+1$).

\begin{figure}[H]
\tikzset{
  redv/.style={circle, draw=red, fill=red!76, inner sep=1.32pt},
  bluev/.style={circle, draw=blue, fill=blue!76, inner sep=1.32pt},
  rededge/.style={thick, draw=red},
  blackedge/.style={thick, draw=black},
  tau/.style={font=\large}
}

\begin{tikzpicture}
\node[tau] at (-9,0) {$\tau^*=0$};
\node[bluev] (b0) at (-6.5,0) {};
\node[tau] at (-2.8,0) {$\tau^*=1$};
\node[bluev] (b1) at (-1,0) {};
\node[redv]  (r1) at (0,0) {};
\node[bluev] (b2) at (1,0) {};
\draw[blackedge] (b1)--(r1)--(b2);
\end{tikzpicture}
 
\vspace{1.2cm}

\begin{tikzpicture}
\node[tau] at (-6,1.4) {$\tau^*=2$};

\begin{scope}[xshift=-3.5cm]
\node[bluev] (a1) at (-1,0) {};
\node[redv]  (a2) at (0,0) {};
\node[bluev] (a3) at (1,0) {};
\node[bluev] (a4) at (0,1) {};
\draw[blackedge] (a1)--(a2)--(a3);
\draw[blackedge] (a2)--(a4);
\end{scope}

\hspace{0.3cm}

\begin{scope}[xshift=1cm]
\node[bluev] (b1) at (-1.5,0) {};
\node[redv]  (b2) at (-0.5,0) {};
\node[redv]  (b3) at (0.5,0) {};
\node[bluev] (b4) at (1.5,0) {};
\draw[blackedge] (b1)--(b2);
\draw[rededge]   (b2)--(b3);
\draw[blackedge] (b3)--(b4);
\end{scope}

\hspace{0.7cm}

\begin{scope}[xshift=5cm]
\node[bluev] (c1) at (-2,0) {};
\node[redv]  (c2) at (-1,0) {};
\node[bluev] (c3) at (0,0) {};
\node[redv]  (c4) at (1,0) {};
\node[bluev] (c5) at (2,0) {};
\draw[blackedge] (c1)--(c2)--(c3)--(c4)--(c5);
\end{scope}

\end{tikzpicture}

\vspace{1.5cm}

\begin{tikzpicture}
\node[tau] at (-6,1.4) {$\tau^*=3$};

\begin{scope}[xshift=-3.5cm]
\node[redv]  (d1) at (0,0) {};
\node[bluev] (d2) at (-1,0) {};
\node[bluev] (d3) at (1,0) {};
\node[bluev] (d4) at (0,1) {};
\node[bluev] (d5) at (0,-1) {};
\draw[blackedge] (d1)--(d2);
\draw[blackedge] (d1)--(d3);
\draw[blackedge] (d1)--(d4);
\draw[blackedge] (d1)--(d5);
\end{scope}

\hspace{0.7cm}

\begin{scope}[xshift=1cm]
\node[bluev] (e1) at (-2,0) {};
\node[redv]  (e2) at (-1,0) {};
\node[redv]  (e3) at (0,0) {};
\node[bluev] (e4) at (1,0) {};
\node[bluev] (e5) at (0,1) {};
\draw[blackedge] (e1)--(e2);
\draw[rededge]   (e2)--(e3);
\draw[blackedge] (e3)--(e4);
\draw[blackedge] (e3)--(e5);
\end{scope}

\hspace{0.7cm}

\begin{scope}[xshift=4.5cm]
\node[bluev] (f1) at (-2,0) {};
\node[redv]  (f2) at (-1,0) {};
\node[bluev] (f3) at (0,0) {};
\node[redv]  (f4) at (1,0) {};
\node[bluev] (f5) at (2,0) {};
\node[bluev] (f6) at (1,1) {};
\draw[blackedge] (f1)--(f2)--(f3)--(f4)--(f5);
\draw[blackedge] (f4)--(f6);
\end{scope}
\end{tikzpicture}

\vspace{0.3cm}

\hspace{2cm}
\begin{tikzpicture}

\begin{scope}[xshift=0cm,yshift=0cm]
\node[bluev] (h1) at (-3,0) {};
\node[redv]  (h2) at (-2,0) {};
\node[bluev]  (h3) at (-1,0) {};
\node[redv] (h4) at (0,0) {};
\node[bluev]  (h5) at (1,0) {};
\node[redv] (h6) at (2,0) {};
\node[bluev] (h7) at (3,0) {};
\draw[blackedge] (h1)--(h2)--(h3)--(h4)--(h5)--(h6)--(h7);
\end{scope}

\begin{scope}[xshift=0cm,yshift=-0.6cm]
\node[bluev] (h1) at (-3,0) {};
\node[redv]  (h2) at (-2,0) {};
\node[redv]  (h3) at (-1,0) {};
\node[bluev] (h4) at (0,0) {};
\node[redv]  (h5) at (1,0) {};
\node[bluev] (h6) at (2,0) {};
\draw[blackedge] (h1)--(h2);
\draw[rededge]   (h2)--(h3);
\draw[blackedge] (h3)--(h4)--(h5)--(h6);
\end{scope}

\begin{scope}[xshift=-0.5cm,yshift=-1.2cm]
\node[bluev] (g1) at (-2.5,0) {};
\node[redv]  (g2) at (-1.5,0) {};
\node[redv]  (g3) at (-0.5,0) {};
\node[redv]  (g4) at (0.5,0) {};
\node[bluev] (g5) at (1.5,0) {};
\draw[blackedge] (g1)--(g2);
\draw[rededge]   (g2)--(g3)--(g4);
\draw[blackedge] (g4)--(g5);
\end{scope}

\begin{scope}[xshift=7.4cm,yshift=-1.2cm]
\node[bluev] (k1) at (-2,0) {};
\node[redv]  (k2) at (-1,0) {};
\node[bluev] (k3) at (0,0) {};
\node[redv]  (k4) at (1,0) {};
\node[bluev] (k5) at (2,0) {};
\node[redv]  (k6) at (0,0.8) {};
\node[bluev] (k7) at (0,1.6) {};
\draw[blackedge] (k1)--(k2)--(k3)--(k4)--(k5);
\draw[blackedge] (k3)--(k6)--(k7);
\end{scope}
\end{tikzpicture}
\end{figure}
 
\bigskip  

The situation becomes significantly richer when $\tau^*\ge 4$. As an illustration, we describe a large family of stem diagrams with a prescribed value of \  $\tau^*\ge 1$.

\begin{enumerate}
\item
Start with an arbitrary finite tree $T$ having $\tau^*+2$ vertices.

\item
Turn $T$ into a stem diagram  $G^* = R\,  \sqcup B$ by coloring all leaves blue and all other vertices red.
Since the red  subgraph $R\subset G^*$ is connected, Corollary~\ref{cor.stem-bounds} implies
that the resulting stem diagram has branching excess exactly $\tau^*$.

\item
We  now generate further examples with the same value of $\tau^*$ by the following
local modification.
Choose a red--red edge $\{x,y\}$ and insert a new blue vertex $z$ in between.
The edge $\{x,y\}$ is then replaced by two red--blue edges $\{x,z\}$ and $\{y,z\}$.
This operation increases the number of blue vertices by one and decreases the number
of red--red edges by one.
By Lemma~\ref{lem.graphcount}, the value of $\tau^*$ remains unchanged.

\item
Repeating step 3 produces additional stem diagrams with the same prescribed value of~$\tau^*$.
\end{enumerate}

\smallskip 

This construction produces a large, albeit very special, family of stem diagrams with prescribed branching excess~$\tau^*$.
Already the choice of the initial tree on $n= \tau^*+2$ vertices 
grows exponentially with $\tau^*$ (see \cite{Otter1948} and \cite[Chapter~3]{Drmota2009} for precise asymptotics).
This shows that even within this restricted construction,
the combinatorial structure of stem diagrams rapidly becomes intricate for large~$\tau^*$.

\subsection{Reconstructing a domain from a stem diagram}
\label{sec.reconstruction}

In Sections~7.1--7.3, we proceeded in an \textit{analytic} way: we 
decomposed a domain $D \subset T_d$ into its stem $D^*$ and its full 
components $F_i$. We now proceed \textit{synthetically}, showing that 
this procedure can be inverted. More precisely, we prove that suitable 
abstract data---consisting of a stem diagram, a collection of full 
domains, and compatible gluing maps---determine a domain of $T_d$ 
uniquely up to isomorphism, yielding a complete combinatorial classification of all domains in $T_d$.
 
\medskip  

\begin{definition} \label{def.ARD}
Fix an integer $d \ge 2$. An admissible \emph{reconstruction datum} consists of a triple $(G^*, (F_i), (g_i))$, where 
\begin{enumerate}[(i)]
\item $G^* = R \sqcup \{b_1, \dots, b_m\}$ is an abstract stem diagram (Definition~\ref{def.stemdiagram}), that is, a finite tree whose vertex set is partitioned as
\[
V(G^*) = R \sqcup \{b_1, \dots, b_m\},
\]
such that the vertices $b_i$ are pairwise non-adjacent and, for every vertex $x \in R$, one has
\[
2 \le \deg_{G^*}(x) \le d-1,
\]

\item a collection of full domains $F_1, \dots, F_m$ in $T_d$ satisfying
\[
|\partial F_i| \ge \deg_{G^*}(b_i);
\]

\item and for each $i \in \{1, \dots, m\}$, an injective map
\[
g_i : N_{G^*}(b_i) \to \partial F_i,
\]
where $N_{G^*}(b_i) \subset R$ denotes the set of neighbors of $b_i$ in $G^*$.
\end{enumerate} 
\end{definition}

\medskip

\begin{remark}
As a concrete instance, the domain in Figure~2 (\S 4) corresponds to the admissible datum where $G^*$ is the stem shown at the end of \S 7. 2, the full 
components  $F_1, F_2, F_3$ are balls of radius $1$ in $T_5$, and the maps $g_i$ are the natural identifications along the unique 
leaf of each $F_i$ adjacent to $p$. The construction below then recovers $D$ up to isomorphism.
\end{remark}

\medskip

\paragraph{\textbf{Construction of a graph $G$.}}
Given admissible gluing data $(G^*,(F_i),(g_i))$, we construct a graph $G$ as follows. Consider first the disjoint union
\[
X = R \sqcup \bigsqcup_{i=1}^m F_i,
\]
and for each $i$ and each $x\in N_{G^*}(b_i)$, identify the vertex $x\in R$ with the vertex $g_i(x)\in \partial F_i$.
This defines a quotient graph that we denote by $G = X/\sim$. We denote by $\omega : X \rightarrow G$ the canonical projection. 
Now, for each $i = \{1, \dots, m\}$, we set
\[
  F_i' := F_i \setminus g_i\!\bigl(N_{G^*}(b_i)\bigr) \subset X  \quad\text{and}\quad  B_i := \omega(F_i')\subset G,
\]
and we define a map 
\begin{equation}\label{def.mappi}
  \pi : G \rightarrow G^*,  \qquad  \pi(x) = \begin{cases} x & \text{ if }  x\in R, \\ 
   b_i & \text{ if }  x\in B_i. \end{cases}
\end{equation}

We collect some basic properties of this construction in the following 
\begin{lemma}\label{lem:G-is-tree}
The graph $G$ is a finite tree and the degree of each vertex in $G$ is at most $d$.
Moreover:
\begin{enumerate}[\hspace{0.3cm} (a)]
\item The map $\omega$ is injective on $R$ and on each $F_i'$.
\item The vertex set of $G$ decomposes as the disjoint union $V(G)=\omega(R)\;\sqcup\; B_1 \;\sqcup\; \cdots \;\sqcup\; B_m.$
\item The map $\pi$ restricts to the identity on $\omega(R)$ and collapses each $B_i$ to the vertex $b_i$.
\item The sets $B_1,\dots,B_m$ are precisely the connected components of $G\setminus \omega(R)$.
\end{enumerate}
\end{lemma}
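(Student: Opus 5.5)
We first treat the set-level statements (a)–(d) and then the graph-theoretic ones: that $G$ is a finite tree of maximal degree at most $d$.

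For (a), note that the identifications only ever glue a vertex $x\in R$ with vertices $g_i(x)\in\partial F_i$. Two distinct vertices of $R$ are never identified with each other. Within a fixed $F_i$, two distinct vertices could only become identified through a common $x\in R$, which would force $g_i(x)=g_i(x')$ with $x\neq x'$; this is excluded by injectivity of $g_i$. Hence $\omega$ is injective on $R$, and on each $F_i'$ it is even the identity up to the inclusion, since no vertex of $F_i'$ is identified with anything. The same observation shows that every vertex of $X$ not in some $F_i'$ is identified with a vertex of $R$. This gives the disjoint decomposition (b) and shows that $\pi$ is well defined, which is (c).

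Next we compute $\chi(G)$. We count vertices and edges:
\[
|V(G)|=r+\sum_i\bigl(|F_i|-\deg_{G^*}(b_i)\bigr), \qquad |E(G)|=\sum_i(|F_i|-1),
\]
because no edges of $R$ are present in $X$ and the edges of the $F_i$ stay distinct. Hmm — the red--red edges of $G^*$ must also appear in $G$. So I will read the construction as including the edges of $G^*[R]$ in $X$; this is presumably what the author intends, since otherwise $G$ is disconnected whenever $l>0$. Then
\[
|E(G)|=l+\sum_i(|F_i|-1),
\]
so
\[
\chi(G)=r-l+m-\sum_i\deg_{G^*}(b_i)=r+m-|E(G^*)|=\chi(G^*)=1,
\]
using that every edge of $G^*$ is either red--red or red--blue.

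For connectivity, each $\omega(F_i)$ is connected, and $\omega(F_i)$ meets $\omega(x)$ for every neighbor $x$ of $b_i$. The edges of $G^*$ then link everything together, because $G^*$ is connected (also using $R\neq\emptyset$, or $m=1$ trivially). A connected finite graph with $\chi=1$ is a tree.

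For the degree bound:
\begin{itemize}
\item[$\circ$] A vertex of $B_i$ has its degree computed inside $F_i\subset T_d$, so it is at most $d$. Edges leaving $B_i$ go only to $\omega(R)$ and already lie in $F_i$.
\item[$\circ$] A vertex $x\in R$ receives one edge from each red neighbor, plus $\deg_{F_i}(g_i(x))=1$ edge from each blue neighbor $b_i$, since $g_i(x)$ is a leaf of the full domain $F_i$. Hence $\deg_G(x)=\deg_{G^*}(x)\le d-1$.
\end{itemize}

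For (d), each $B_i=\omega(F_i')$ is connected: $F_i'$ is $F_i$ minus some leaves, which is connected by Lemma~\ref{lem:leaf-connected-complement} applied repeatedly. Moreover, $F_i'\neq\emptyset$, because two leaves of $F_i$ cannot be adjacent when $|F_i|\ge3$; the case $|F_i|=2$ with both vertices glued needs a separate check. There is no edge between $B_i$ and $B_j$ for $i\neq j$, and no edge joins them except through $\omega(R)$. So the $B_i$ are exactly the components of $G\setminus\omega(R)$.

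The main obstacle is the bookkeeping in this last step. In particular, the degenerate case where $F_i$ is a two-vertex domain with both vertices glued would make $B_i$ empty; this must be ruled out (or handled) using the tree structure of $G^*$ together with condition (iii) of Definition~\ref{def.stemdiagram}.
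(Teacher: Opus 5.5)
Your argument is correct wherever it is complete. Parts (a)--(c) and the degree bound match the paper, which simply calls (a)--(c) immediate. For the tree property you take a different route.
\begin{itemize}
\item \emph{The paper's route.} It gets connectivity by asserting that $R$ is connected ``as a subtree of $G^*$'' and attaching each $B_i$ to it. It gets acyclicity by pushing a hypothetical cycle of $G$ down to $G^*$ via $\pi$.
\item \emph{Your route.} You compute $\chi(G)=r-l+m-\sum_i\deg_{G^*}(b_i)=r+m-|E(G^*)|=\chi(G^*)=1$, and you lift paths of $G^*$ through the connected sets $\omega(F_i)$.
\end{itemize}
Your version is the more robust one. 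First, $R$ need not be connected: take the stem diagram that is the path $b_1,x,b_2,y,b_3$ with $R=\{x,y\}$, one of the paper's own $\tau^*=2$ examples. So the connectivity step really needs your use of connectedness of $G^*$ rather than of $R$. Second, your edge count is valid in the multigraph quotient, so $\chi=1$ plus connectivity also rules out loops and merged parallel edges with no extra check. The projection argument, by contrast, needs care: closed walks in a tree always exist, so one must show the projected walk does not backtrack, and the paper only sketches this. Your reading that $X$ carries the red--red edges of $G^*$ is the intended one; the paper uses that $\omega$ is a graph isomorphism from $R$ onto $\omega(R)$.

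The step you leave open in (d) cannot be closed the way you suggest, because (d) is false in that case. Take the following data.
\begin{itemize}
\item $d\ge 3$, and the stem diagram $b_1,x,b_2,y,b_3$ above.
\item $F_2$ a two-vertex domain, with $g_2$ a bijection from $\{x,y\}$ onto $\partial F_2=F_2$.
\item $F_1,F_3$ any full domains, with $g_1(x)$ and $g_3(y)$ leaves.
\end{itemize}
Every condition of Definition~\ref{def.ARD} holds: the red degrees are $2\le d-1$, the leaves $b_1,b_3$ are blue, and $|\partial F_2|=2=\deg_{G^*}(b_2)$. Yet $F_2'=\emptyset$, so $B_2=\emptyset$. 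The edge of $F_2$ becomes an edge $\omega(x)\omega(y)$ of $G$, and $G\setminus\omega(R)$ has only the two components $B_1,B_3$. Neither the tree structure of $G^*$ nor condition (iii) of Definition~\ref{def.stemdiagram} excludes this configuration.

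The same example breaks part (iii) of Theorem~\ref{th.reconstruction}. The stem of the resulting domain is the path $b_1,x,y,b_3$, which has a red--red edge, and is not $G^*$. The example also makes $\pi$ send the edge $\omega(x)\omega(y)$ to a non-edge. The paper's proof of (d) has the same blind spot: it deduces (d) from connectedness of $F_i'$ and (b) without checking $F_i'\neq\emptyset$.

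The right repair is an extra hypothesis, e.g.\ $|F_i|>\deg_{G^*}(b_i)$ for all $i$. This only excludes two-vertex $F_i$ glued at both ends. Under it your proof of (d) is complete, and your arguments for the tree property and the degree bound go through even without it.
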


\begin{proof}
Points (a)--(c) follow directly from the construction of $G$.

\smallskip

To prove (d),  observe first that each $F_i'$ is connected. Indeed, since each
$F_i$ is full, every vertex of $\partial F_i$ is a leaf of $F_i$, and by
Lemma~\ref{lem:leaf-connected-complement}, removing finitely many leaves  from a tree does not disconnect it. Hence each $F_i'$ is connected, and therefore
its image $B_i=\omega(F_i')$ is connected. 
It now follows from (b) that  $B_1,\dots,B_m$ are precisely the connected components of $G\setminus \omega(R)$.
 
\medskip

We now prove that $G$ is a finite tree.  Finiteness follows from (b), since $R$ and each $F_i'$ are finite.

\medskip

\emph{$G$ is connected.}
By (a), $\omega$ restricts to a graph isomorphism between $R$ and $\omega(R)$.
 Since $R$ is connected (as a subtree of the tree $G^*$), it follows that $\omega(R)$ is connected.
Since each $B_i$ meets $\omega(R)$  along at least one vertex, $G$ is also connected.

\medskip

\emph{$G$ is acyclic.} Indeed, if  $G$ contained a non-trivial cycle, its image under $\pi$ would give a non-trivial closed walk in $G^*$. Since $\pi$ collapses only the connected subgraphs $B_i$,
this would produce a cycle in $G^*$, contradicting that $G^*$ is a tree.

\medskip

We finally prove that the  degree of any vertex $x$ in $G$ is at most $d$. By (b), we know that $x$ either belongs to $\omega(R)$ or to some $B_i$. If $x\in B_i$, there exists a vertex $z\in F_i'$ such that $\omega(z) =x$, thus
$$
 \deg_{G}(x)  =  \deg_{F_i}(z) \leq d  
$$
since $F_i$ is by definition a domain in $T_d$. 
On the other hand, if  $x\in \omega(R)$, each neighbor $b_i$ in $G^*$ corresponds to exactly one edge in $G$, since $x$ is identified with a unique boundary leaf of $F_i$, which has a unique neighbor inside $F_i$.
Thus
\[
\deg_G(x)=\deg_{G^*}(\pi(x))\le d-1.
\]
Hence $\max_{x\in G}\deg_G(x)\le d$.

\end{proof}

 \medskip

We  now state the reconstruction Theorem: 

\medskip

\begin{theorem} \label{th.reconstruction}
There exists a graph isomorphism $\phi : G \to D$ from $G$ onto a domain $D\subset T_d$.
It satisfies the following properties 
\begin{enumerate}[(i)]
\item The $\phi$-image of $\omega(R)$ coincides with the residual boundary of $D$.
\item the full components of $D$ are the images of the graphs $F_i$;
\item The map $\phi$ induces an isomorphism $\phi^*$ between $G^*$ and the stem $D^*$ of $D$.
\end{enumerate}
Moreover,
\[
\tau(D)=\tau^*(G^*)=m+l-1,
\]
where $l=|E(R)|$ is the number of edges in the subgraph $R\subset G^*$. In particular, $D$ is optimal if and only if
\[
 m+l\le d-1.
\]
Conversely, every domain $D\subset T_d$ arises, up to isomorphism, from admissible gluing data $(G^*,(F_i),(g_i))$ by this construction.
\end{theorem}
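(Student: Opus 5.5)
The plan has four steps: embed $G$ in $T_d$, identify the residual boundary, identify the full components and the stem, and then read off $\tau(D)$; the converse will be an unwinding of the earlier analytic decomposition.

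By Lemma~\ref{lem:G-is-tree}, $G$ is a finite tree of maximum degree at most $d$. Lemma~\ref{lem:embedding} then gives a graph isomorphism $\phi:G\to D$ onto a domain $D\subset T_d$. Since $D$ is an induced subtree of a tree, degrees are preserved: $\deg_D(\phi(x))=\deg_G(x)$ for all $x$. All claims about $D$ therefore reduce to degree computations in $G$.

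For (i), we compare $\omega(R)$ with $R(D)$ via degrees.
\begin{itemize}
\item For $x\in R$, the proof of Lemma~\ref{lem:G-is-tree} gives $\deg_G(\omega(x))=\deg_{G^*}(x)\in[2,d-1]$. Hence $\phi(\omega(x))\in R(D)$.
\item For $y\in B_i$ with $y=\omega(z)$ and $z\in F_i'$, we have $\deg_G(y)=\deg_{F_i}(z)$. The only identifications occur at vertices of $g_i(N_{G^*}(b_i))$, which are leaves of $F_i$. Each is glued to a single red vertex, so $z$ keeps its neighbors.
\item Now either $z$ is an interior vertex of $F_i$, so $\deg=d$ and $\phi(y)\notin\partial D$; or $z$ is an unglued leaf of $F_i$, so $\deg=1$ and $\phi(y)\in L(D)$.
\end{itemize}
Thus $\phi(\omega(R))=R(D)$. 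Lemma~\ref{lem:G-is-tree}(d) then shows that the $\phi(B_i)$ are exactly the components of $D\setminus R(D)$. Their thickenings by adjacent residual vertices are $\phi(\omega(F_i))$, which gives (ii).

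For (iii), we check that the edges of $D^*$ from Definition~\ref{def.stem} match those of $G^*$. Red--red edges of $G$ come only from $R$. A red vertex $x$ is adjacent to $B_i$ exactly when $x\in N_{G^*}(b_i)$, since $g_i(x)$ is a leaf whose neighbor lies in $F_i'$. So $\pi$ composed with $\phi^{-1}$ induces $\phi^*:G^*\cong D^*$. The formula $\tau(D)=\sum_{x\in R(D)}(\deg_D(x)-1)=\tau^*(G^*)$ follows from \eqref{def.taubis} and the degree equality. Lemma~\ref{lem.graphcount} gives $\tau^*=m+l-1$, and Theorem~\ref{CritereIsop} turns optimality into $m+l\le d-1$.

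For the converse, take a domain $D$ and the data from Theorem~\ref{th.dec}. Let $G^*=D^*$, which is a stem diagram: it is a tree by Proposition~\ref{prop.stemproperties}, and its leaves are blue by (ii) there. Take the $F_i$ to be the full components. Let $g_i$ be the inclusion $N_{D^*}(b_i)=R(D)\cap\partial'B_i\hookrightarrow\partial F_i$, whose image consists of leaves of $F_i$ by the proof of Theorem~\ref{th.dec}. The glued graph $G$ is then canonically isomorphic to $D$: each residual vertex is identified in all $F_i$ containing it, and there are no other overlaps, because the $B_i$ are disjoint.

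The full domain case ($R(D)=\emptyset$, trivial stem with $m=1$) needs a separate remark. The main obstacle I expect is bookkeeping rather than ideas: making sure glued leaves acquire exactly the right degree, and that no unglued boundary leaf of $F_i$ gets counted as residual. The degree case analysis above addresses both points.
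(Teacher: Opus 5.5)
\paragraph{Overall.} Your route is the paper's own: embedding via Lemma~\ref{lem:embedding}, degree comparison for (i), components and stem read off through $\pi$, $\tau$ from \eqref{def.taubis} and Lemma~\ref{lem.graphcount}, and the converse via Theorem~\ref{th.dec}. Your degree case analysis is more explicit than the paper's.

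\paragraph{The gap.} In (iii) you assert two things: that for $x\in N_{G^*}(b_i)$ the leaf $g_i(x)$ has its neighbor in $F_i'$, and that red--red edges of $G$ come only from $R$. Definition~\ref{def.ARD} does not guarantee either, and the paper's proof skips the same point. A two-vertex domain $F_i=\{u,v\}$ is full with $|\partial F_i|=2$, so it may be attached to a blue vertex of degree $2$ with both $u$ and $v$ glued. Then $F_i'=\emptyset$, so $B_i=\emptyset$, and the edge $uv$ becomes a red--red edge of $G$.

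Concretely, take $d\ge 3$, let $G^*$ be the path $b_1 - x - b_2 - x' - b_3$, and let every $F_i$ be a two-vertex domain. Then $G$ is a path on four vertices. Its stem is $b_1 - x - x' - b_3$, with $m=2$ and $l=1$, which is not isomorphic to $G^*$, where $m=3$ and $l=0$. So (ii), (iii), and Lemma~\ref{lem:G-is-tree}(d), which you invoke, all fail here. Only (i) and $\tau(D)=\tau^*(G^*)=m+l-1$ survive, because they use nothing but degrees.

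\paragraph{The fix.} Require in the admissible data that $g_i$ is not onto $F_i$, i.e.\ $F_i'\neq\emptyset$. Equivalently, $\deg_{G^*}(b_i)=1$ whenever $|F_i|=2$. With this condition your claim holds:
\begin{itemize}
\item If $|F_i|\ge 3$, the neighbor of a leaf of $F_i$ has degree at least $2$. So it is not a leaf, hence (as $F_i$ is full) not in $\partial F_i$, hence not glued.
\item If $|F_i|=2$, the other vertex is unglued.
\end{itemize}
In particular no edge of any $F_i$ joins two glued vertices. So the red--red edges of $G$ are exactly those of $R$, and each $B_i$ is a nonempty component. The extra condition costs nothing in the converse, since data extracted from an actual domain has $F_i'=B_i\neq\emptyset$.

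\paragraph{Smaller points.}
\begin{itemize}
\item In (iii), also state the reverse inclusion. If $\omega(x)$ is adjacent to $\omega(z)$ with $z\in F_i'$, then, since $z$ is identified with nothing, the edge is the image of an edge $wz$ of $F_i$ with $w$ glued to $x$. Hence $w=g_i(x)$ and $x\in N_{G^*}(b_i)$.
\item Write out the trivial-stem case ($m=1$, $R=\emptyset$, $G=F_1$) that you announce but do not give.
\end{itemize}
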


\begin{proof}
The existence of  $\phi$ follows from Lemmas \ref{lem:embedding}  and \ref{lem:G-is-tree}. 
For simplicity, we henceforth regard $R$ as a subset of both $G$ and $D$ via 
the maps $\omega$ and $\phi$. This identification is justified since 
$\omega$ is injective on $R$ by Lemma~\ref{lem:G-is-tree}(a)
, and $\phi$ is a graph 
isomorphism; in particular, both maps preserve the combinatorial 
structure of $R$.
We prove (i)--(iii) : 

\smallskip

(i) 
Let $x\in R$. By Definition~\ref{def.ARD}(i),
\[
2 \le \deg_{G^*}(x) \le d-1.
\]
Since $\pi$ restricts to the identity on $R$, we have
\[
\deg_G(x)=\deg_{G^*}(x).
\]
After embedding $G$ into $T_d$, the vertex $x$ therefore has degree between $2$ and $d-1$ inside $D$.
Because $T_d$ is $d$–regular, $x$ has at least one neighbor in $T_d\setminus D$, and since its degree in $D$ is at least $2$, it is not a leaf. Thus $x$ belongs to the residual boundary of $D$.

\smallskip

Conversely, if $y\in B_i$, then $y$ lies in the image of $F_i'$. Its degree inside $D$ coincides with its degree in the full domain $F_i$. Since interior vertices of a full domain are not in the residual boundary, we conclude that the residual boundary of $D$ is precisely $R$.

\smallskip

(ii)  Since $F_i = B_i \cup (R \cap \partial' B_i)$ inside $G$, its image under $\phi$ is
$$
 \phi(F_i)=\phi(B_i)\cup\bigl(\phi(R)\cap\partial'\phi(B_i)\bigr).
$$
These  are the full components of $D$ by definition \eqref{eq:def-Fi}.
 
\smallskip

(iii)  
Collapsing each full component of $D$ corresponds exactly to the map $\pi$,
so the stem  $D^*$ of $D$ is isomorphic to $G^*$.

\smallskip

Using Lemma~\ref{lem.graphcount} and $D^* \simeq G^*$, we have now
$$
\tau(D)=\tau^*(D^*) = \tau^*(G^*)=m+l-1.
$$
The optimality criterion follows then from  Theorem \ref{CritereIsop} since the condition 
$\tau(D) \leq d-2$ is equivalent to $m+l =  \tau(D) + 1\le d-1$.

\smallskip

In the converse direction, given  a domain $D\subset T_d$, its stem $D^*$,
its full components, and the natural identification maps along the residual boundary
provide admissible  gluing data.
Applying the above construction produces  a graph isomorphic to $D$.
\end{proof}

\medskip

The situation is summarized in the following commutative diagram, 
where $\rho : D \to D^*$ is the canonical projection: 
\[
\begin{tikzcd}[row sep=2.2em, column sep=2.5em]
X \arrow[r, "\omega"] \arrow[dr, "\pi\circ\omega"'] &
G \arrow[d, "\pi"] \arrow[r, "\phi"] &
D \arrow[d, "\rho"] \\
& G^* \arrow[r, "\phi^*"] & D^*
\end{tikzcd}
\]
The maps $\phi$ and $\phi^*$ are graph isomorphisms while $\omega$,  $\pi$ and $\rho$ are  epimorphisms.

\medskip 

The decomposition and reconstruction procedures described in this section are inverse to each other up to isomorphism. Hence domains in $T_d$ are in natural one--to-one correspondence with  admissible gluing data.

\bigskip 

\textit{Acknowledgements.} The author thanks Bruno Luiz Santos Correia for discussions on this subject during the preparation of his PhD thesis and for comments on an earlier version of the manuscript. He also thanks Oliver Janzer for helpful comments and suggestions, and Nicolas Monod for pointing out the link with the notion of complete subtrees in representation theory and for suggesting the reference~\cite{Olshanskii1977}.

\end{document}